\documentclass{article}
\usepackage[utf8]{inputenc}
 
\title{Matrices, Bratteli Diagrams and Hopf-Galois Extensions}
\author{Ghaliah Alhamzi$^1$ \& Edwin Beggs$^2$ }

\date{%
    $^1$Department of Mathematics and Statistics, College of Science, \\ Imam Mohammad Ibn Saud Islamic University (IMSIU), Riyadh, Saudi Arabia \\%
    $^2$College of Science, Swansea University, Wales 
    \\[2ex]%
    \today
}

\usepackage{yfonts}
\usepackage{amsmath}
\usepackage{amsfonts}
\usepackage{amssymb}
\usepackage{amsthm}
\usepackage{mathrsfs}
\usepackage{bm}
\usepackage[pdftex]{graphicx,color}
\usepackage{graphics,color}
\usepackage{epstopdf}
\usepackage{graphicx}
\usepackage{textcomp}
\usepackage{tkz-berge}
\usepackage{pgfplots, pgfplotstable}
\usepackage[font=small,labelfont=bf,labelsep=colon]{caption}
\usepackage{pdfpages}
\usepackage{fullpage}
\usepackage{calligra} 
\usepackage[all]{xypic}

\newtheorem{theorem}{Theorem}[section]

\newtheorem{prop}[theorem]{Proposition}

\theoremstyle{remark}

\newtheorem{definition}[theorem]{Definition}

\theoremstyle{definition}

\setlength{\unitlength}{10mm}

\newcommand{\id}{\mathrm{id}}

\newcommand{\extd}{\mathrm{d}}
\newcommand{\tens}{\mathop{\otimes}}

\newcommand{\C}{\mathbb{C}}

\newcommand{\Z}{\mathbb{Z}}

\newcommand{\mathsym}[1]{{}}
\newcommand{\unicode}[1]{{}}
\newcommand{\h}{{\scriptstyle\frac{1}{2}}}

\newcommand{\can}{\mathrm{ can}}
\newcommand{\ver}{\mathrm{ ver}}
\newcommand{\inc}{\mathrm{inc}}
\newcommand{\inv}{\mathrm{inv}}
\newcommand{\rep}{\mathrm{rep}}
\newcommand{\conv}{{\odot}}

\begin{document}
 
\maketitle

 \abstract{We show that the matrix embeddings in Bratteli diagrams are iterated direct sums of Hopf-Galois extensions (quantum principle bundles) for certain abelian groups. The corresponding strong universal connections are computed. We show that $ M_{n}(\C)$ is a trivial quantum principle bundle for the Hopf algebra $ \C[\Z_{n} \times \Z_{n}] $. We conclude with an application relating known calculi on groups to calculi on matrices.}
\section{Introduction}
We suppose that $ P $ is a unital algebra and that $ H $ is a Hopf algebra over $\C$. We write the coproduct of $H$ as
$\Delta(h)=h_{(1)}\tens h_{(2)}$ using the Sweedler notation. The idea of using a Hopf algebra in place of a group in a principal bundle 
was given in \cite{Sch:pri}. Independently in \cite{BrzMa:gau,BrzMa:cla} this was described in terms of differential calculi.  For the connection between classical Galois theory and 
Hopf-Galois extensions, see \cite{ChaSwe,KreTak,montGal}.

\begin{definition}
	For an algebra $ P $ and a Hopf algebra $ H $, we say that a right coaction $ \Delta _{R}: P\rightarrow P \tens H $ makes $ P $ into a right $ H $-comodule algebra if $ \Delta_{R} $ is an algebra map, i.e.\ for $ p,q \in P $ 
	\begin{eqnarray}
	(pq)_{[0]} \tens 	(pq)_{[1]} =p_{[0]} q_{[0]} \tens p_{[1]} q_{[1]}\ ,\quad \Delta_R(1)=1\tens 1
	\end{eqnarray}
	where we write the coaction as $ \Delta_{R}=p_{[0]} \tens p_{[1]}    $.
\end{definition}

This means that the invariants $ A=P^{\mathrm{coH}} =\{p\in P:\Delta_R(p)=p\tens 1\}$ is a subalgebra of $P$. 

\begin{definition}
	Let $ P $ be a right $ H $-comodule algebra. $ P $ is a Hopf-Galois extension of $ A=P^{\mathrm{coH}} $ if the canonical map 
	$\mathrm{ver}^{\sharp}:P \tens_{A} P\rightarrow P \tens H$ 	is a bijection, where 
	\begin{eqnarray}
\mathrm{ver}^{\sharp}(p\tens q) = pq _{[0]}  \tens q_{[1]}  \ .
	\end{eqnarray}
\end{definition}
The idea of Hopf-Galois extension can also be described as a quantum principle bundle for the case of universal differential calculi \cite{BMbook}. We shall return to this later where we give an application to differential calculi on the matrices. If $ P $ is a Hopf-Galois extension then there are elements of $ P \tens_{A} P $ mapping to $ 1 \tens h $ for all $ h \in H $. For many practical purposes we seek an element $ h^{(1)} \tens h^{(2)} \in P \tens P $ (not $ \tens_{A}$) mapping to $ 1 \tens h $ under the canonical map. This is no longer unique, but we ask whether there is a function $ \omega^{\sharp}: H\rightarrow P \tens P $ given by $ \omega^{\sharp}(h) =h^{(1)} \tens h^{(2)}  $ such that
$\omega^{\sharp}(1)=1\tens 1$ and
\begin{align} \label{equns5}
h^{(1)}    \tens h^{(2)}{}_{[0]}    \tens    h^{(2)} {}_{[1]}     &=    h_{(1)}{}^{(1)}    \tens h_{(1)}{}^{(2)}    \tens    h_{(2)}  \ ,\cr
h^{(1)} {}_{[0]}     \tens h^{(1)} {}_{[1]}   \tens    h^{(2)}    &=    h_{(2)}{}^{(1)}    \tens S h_{(1)}  \tens    h_{(2)} {}^{(2)}   
\end{align}
in which case we say that $  \omega^{\sharp} $ is a strong universal connection (this name was given in \cite{Haj}). In \cite{BBstrong} it was shown that if $ H $ has a normalised integral then $ \omega^{\sharp} $ always exists, using the next result, for which we provide a framework of the proof as we shall require it later.
\begin{theorem}\label{integralHopfGalois} Suppose that $H$ has normalised  left-integral $\int$ and bijective antipode, and that $P$ is a right $H$-comodule algebra with $\ver^\sharp$ surjective. Then $(P,H,\Delta_R)$ is a universal quantum  principal bundle and admits a strong connection. To show this begin with a linear map $h\mapsto h^{(1)}\tens h^{(2)}\in P\tens P$ so that $1\mapsto 1\tens 1$ and
$\ver^\sharp(h^{(1)}\tens h^{(2)})=1\tens h\in P\tens H$, but not necessarily satisfying (\ref{equns5}).  Now define $b: H \tens H \rightarrow \C$ by $b (h,g)=\int (h S g)$, and then
$a_{R}: P \tens H \rightarrow P$ and $ a_{L}: H \tens  P \rightarrow P$ by
$a_{R} (p \tens h)= p_{[0]} b (p_{[1]}, h)$ and $ a_{L}( h\tens p )=b (h, S^{-1} p_{[1]})  p_{[0]} $. Then we have a strong universal connection
\begin{eqnarray} 
\omega^{\sharp}(h)= a_{L}(h_{(1)} \tens h_{(2)}{} ^{(1)})  \tens a_{R} (   h_{(2)}{}^{(2)}\tens h_{(3)} )
\end{eqnarray}
\end{theorem}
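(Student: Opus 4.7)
The plan is to verify the three defining properties of a strong universal connection: (a) $\omega^\sharp(1) = 1\tens 1$; (b) $\ver^\sharp(\omega^\sharp(h)) = 1\tens h$, so $\omega^\sharp$ is a section of the canonical map; and (c) the two equivariance identities in (\ref{equns5}). Condition (a) is immediate: by hypothesis $1^{(1)}\tens 1^{(2)} = 1\tens 1$, and since $\Delta_R(1) = 1\tens 1$ and $\int(1)=1$, one reads off $a_L(1\tens 1) = a_R(1\tens 1) = 1$, hence $\omega^\sharp(1) = 1\tens 1$.

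The technical core of the proof is a family of intertwining identities satisfied by $b(h,g) = \int(hSg)$, derived from the left-integral axiom $h_{(1)}\int(h_{(2)}) = \int(h)\,1$ together with its mirror form (which holds because $S$ is bijective). These identities express how a coproduct factor in one slot of $b$ is converted, possibly with an $S$- or $S^{-1}$-twist, into a coproduct factor in the other slot; they are what force the outputs of $a_R$ and $a_L$ to be equivariant under the coaction. A second auxiliary identity, obtained by applying $\Delta_R\tens\id$ to the given property $h^{(1)} h^{(2)}{}_{[0]} \tens h^{(2)}{}_{[1]} = 1\tens h$ of the starting map and then using coassociativity, takes the form
\[
h^{(1)}{}_{[0]}\,h^{(2)}{}_{[0]} \tens h^{(1)}{}_{[1]}\,h^{(2)}{}_{[1](1)} \tens h^{(2)}{}_{[1](2)} \;=\; 1 \tens 1 \tens h .
\]
I would establish these as preliminary lemmas.

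Condition (c) is then verified by applying the relevant coaction to $\omega^\sharp(h)$, pushing it through $a_R$ or $a_L$ by coassociativity of $\Delta_R$ on $P$, and invoking the intertwining identities to reorganise the Sweedler indices. The $S$-twist in the middle slot of the left-equivariance line of (\ref{equns5}) arises precisely from the antipode-twisted form of the $b$-identity, and is the reason for the $S^{-1}$ in the definition of $a_L$: the bijectivity of $S$ ensures that the two twists cancel. Condition (b) is proved by a direct computation of $\ver^\sharp(\omega^\sharp(h))$, where expanding the inner coaction via coassociativity produces factors that the auxiliary identity displayed above collapses to the trivial form $1\tens 1\tens\cdot$, after which the remaining integrals telescope via the left-integral axiom to yield $1\tens h$.

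The main obstacle is the bookkeeping of three layers of Sweedler indices --- those of the coproduct on $h$, those of the $P$-coactions on the outputs of the starting map, and those arising from internal $\Delta$-splits introduced when the coaction passes through the integral --- and identifying at each step the precise pair of indices on which the intertwining lemma should act. Once the preliminary lemmas are secured, the remaining work reduces to a careful expansion-and-rearrangement with each lemma applied at the right spot.
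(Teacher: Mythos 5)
Your outline coincides with the paper's approach: the paper itself only records the construction of $b$, $a_L$, $a_R$ and the formula for $\omega^{\sharp}$, deferring the verification to \cite{BBstrong}, and that verification consists of exactly the three checks you list, with the colinearity identities $\Delta_R\big(a_R(p\tens h)\big)=a_R(p\tens h_{(1)})\tens h_{(2)}$ and $\Delta_R\big(a_L(h\tens p)\big)=a_L(h_{(2)}\tens p)\tens S(h_{(1)})$ as the key lemmas and the coacted form of the section property handling $\ver^\sharp\circ\omega^\sharp=1\tens(\,\cdot\,)$. One caution on your preliminary lemmas: both colinearity identities follow from the left-integral axiom alone (apply it to $k=p_{[1]}\,Sh$ and to $k=h\,p_{[1]}$ respectively, then cancel using $S(h_{(1)})h_{(2)}=\epsilon(h)1$), so you should not appeal to a ``mirror form'' $\int(h_{(1)})h_{(2)}=\int(h)1$ --- that is the right-integral property, which does \emph{not} follow from bijectivity of $S$; the bijectivity of the antipode is needed only so that $b(h,S^{-1}p_{[1]})=\int(h\,p_{[1]})$ makes sense in the definition of $a_L$.
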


In 1972 Ola Bratteli introduced graphs for describing certain classes of $ C^{*} $- algebras in terms of limits of direct sums of matrices \cite{Brat}. This is a graph split into levels, and an example of one level in a Bratteli digram is
\begin{align} \label{eq47}
 \UseComputerModernTips
M_{1} \oplus M_{2} = 
\left\lbrace   \vcenter{\hbox{
		\xymatrix{
			M_{1}  {\bullet}  \ar@{=}[rd] \ar@{-}[r] &{\bullet} M_{1}
			\\
			M_{2} {\bullet}  \ar@{-}[r]  & {\bullet} M_{4} 
}  }} \right\rbrace 
=M_{1} \oplus M_{4}=P
\end{align}  
which represents the map 
\begin{eqnarray*}
(a)\oplus \left(\begin{array}{cc}
b&  c\\
d& e
\end{array}
 \right) \mapsto (a) \oplus \left(\begin{array}{cccc}
a& 0& 0& 0 \\
0& a& 0& 0 \\
0& 0& b& c \\
0& 0& d& e
\end{array}
\right) \ .
\end{eqnarray*}
In Section \ref{sec1} we will show that such diagrams do not necessarily give Hopf-Galois extensions. However, we can split the level on the diagram into smaller pieces, each of which is a direct sum of Hopf-Galois extensions. For example, we rewrite (\ref{eq47}) as a composition of three stages
\begin{align*} \label{eq48} 
\UseComputerModernTips
\xymatrix{
  {\bullet}  \ar@{-}[r]  \ar@{-}[rd]&{\bullet}  \ar@{-}[r] &{\bullet}  \ar@{-}[r]&{\bullet} 
	\\
& {\bullet}  \ar@{=}[r]&{\bullet} \ar@{-}[r]&{\bullet} \\
 {\bullet} \ar@{-}[r]& {\bullet}   \ar@{-}[r]&{\bullet}  \ar@{-}[ru]&
}
\end{align*} 
and we refer to these as (from right to left) Case 1, Case 2 and Case 3. In each of these cases we shall exhibit a strong universal connection map. For our purposes it is sufficient to consider $ H=\C[G] $, the complex valued functions on a finite group $ G $. Then $ H $ has basis $ \delta_{g} $, the function which is $ 1 $ at $ g \in G $ and zero elsewhere. The Hopf algebra operators are 
\begin{align*}
\delta_{x} \delta_{y}&= \begin{cases}
\delta_{x} & \mbox{if}\ x=y \\
0 & \mbox{if} \ x \neq y
\end{cases} \quad \text{ and} \quad \Delta \delta_{g} =\sum_{  x,y : xy=g}  \delta_{x} \tens \delta_{y}\cr
1&= \sum_{x \in G} \delta_{x}\ , \quad \epsilon(\delta_{x})=\delta_{x,e}\ , \quad  S (\delta_{x})=\delta_{x^{-1}}\ .
\end{align*}
The idea of a trivial quantum principle bundle was set down in \cite{Ma:nonab}. In Section \ref{secTQPB} we show that $ M_{n}(\C) $ is a trivial quantum principle bundle for the Hopf algebra $ \C[\Z_{n} \times \Z_{n}] $. Note that in \cite{BrzMa:cmp} it was shown that $ M_{n}(\C) $ was an algebra factorisation of two copies of $ \C[\Z_{n}] $ satisfying a Galois condition. We conclude by an application relating differential calculi on $ M_{n}(\C) $ to differential calculi on $ \C[\Z_{n} \times \Z_{n}] $.  

The example of differential calculi shows the Hopf-Galois extensions described here have applications, and in general quantum principle bundles are an expanding era of interest in noncommutative geometry. In algebraic topology iterated fibrations (often called towers of fibrations) often occur, e.g.\ Postnikov systems or Postnikov towers \cite{PostTower}. Here the mere existence of these iterated fibrations is very useful.
\section{A Bratteli Diagram which is not a Hopf-Galois Extension } \label{sec1}
We shall show that the Bratteli diagram 
\begin{align}  \UseComputerModernTips
M_{1} \oplus M_{1} = 
\left\lbrace   \vcenter{\hbox{
		\xymatrix{
			M_{1}  {\bullet}  \ar@{-}[rd] \ar@{-}[r] &{\bullet} M_{1}
			\\
			M_{1} {\bullet}  \ar@{-}[r]  & {\bullet} M_{2} 
}  }} \right\rbrace 
=M_{1} \oplus M_{2}=P
\end{align} 
does not give an inclusion coming from a Hopf-Galois extension. In terms of matrices this is 
\begin{eqnarray*}
(a)\oplus (b)\mapsto (a) \oplus \left(\begin{array}{cc}
a& 0 \\
0& b
\end{array}
\right) 
\end{eqnarray*}
$ P $ has a linear basis $ (1) \oplus 0 $ and $ (0) \oplus E_{ij} $ for $ i,j \in \{ 0,1\} $ and $ A $ has a linear basis $ (1) \oplus E_{11} $ and $ (0) \oplus E_{22} $. Note that multiplication by elements of $ A $ simply scales each of the given basis vectors in $ P $ by a number. Thus to find $ P \tens_{A} P $ we only have to consider the first element of $ P $ to be a basis element. Note that in $ P \tens_{A} P $
\begin{eqnarray*}
((1) \oplus 0 ) \tens (\alpha \oplus \beta)=((1) \oplus 0 ) ((1) \oplus E_{11})  \tens (\alpha \oplus \beta)=((1) \oplus 0 )\tens  ((1) \alpha \oplus E_{11} \beta)
\end{eqnarray*}
so we have
\begin{eqnarray*}
((1) \oplus 0 )\tens  ((0) \oplus E_{2j})=0 \ .
\end{eqnarray*}
Next
\begin{eqnarray*}
((0) \oplus E_{i1})  \tens (\alpha \oplus \beta)=((0) \oplus E_{i1} ) ((1) \oplus E_{11})  \tens (\alpha \oplus \beta) =((0) \oplus E_{i1} ) (\alpha \oplus E_{i1} \beta)  
\end{eqnarray*}
so we have 
\begin{eqnarray*}
((0) \oplus E_{i1})\tens  ((0) \oplus E_{2j})=0 \ .
\end{eqnarray*}
Next
\begin{eqnarray*}
( (0) \oplus E_{i2} )\tens (\alpha \oplus \beta) =((0) \oplus E_{i2} ) ((0) \oplus E_{22}) \tens (\alpha \oplus \beta) =((0) \oplus E_{i2} )\tens  ((0) \oplus E_{22} \beta) 
\end{eqnarray*}
so we have  
\begin{eqnarray*}
( (0) \oplus E_{i2} )\tens ((1)\oplus 0) =((0) \oplus E_{i2} )\tens  ((0) \oplus E_{1j}) =0 \ .
\end{eqnarray*}
We have shown that $ 12 $ tensor products of the basis of $ P $ with itself disappear, making $ P \tens_{A} P $ $ 13 $ dimensional. If this was  a Hopf-Galois extension this would have to be $ \dim P  \times \dim H$, which would be a multiple of $ 5 $.
\section{Combining Matrices on Block Diagonals}
We consider two cases of subalgebras $ A $ of $P= M_{m}(\C) $ and also a subalgebra of the direct sum of matrix algebras,  and show that they form quantum principle bundles. In the following section we count matrices from entry $ 0,0 $ in the top left, and use mod $ m $ arithmetic for the rows and columns of $ M_{m}(\C) $. We follow on from the previous section by calculating $ P \tens_{A} P $ in these three cases.
\subsection*{Case 1:}
We choose block decompositions of $ M_{m}(\C) $ with rows and columns being divided into intervals of nonzero length $ l_{0}, l_{1}, \dots, l_{n-1}  $ where $ l_{0}+l_{1}+\ldots +l_{n-1}=m $. Let $ A $ be the image of the nonzero diagonal embedding
\begin{eqnarray}\label{eq30}
M_{l_{0}}(\C) \oplus M_{l_{1}}(\C)  \oplus \ldots\oplus  M_{l_{n-1}}(\C)  \longrightarrow M_{m}(\C)\ .
\end{eqnarray}
For row or column $j$ we take $(j)\in\Z_n$ to be the block to which row or column $j$ belongs.
Thus for $ l_{0}=2, l_{1}=1 , m=3$ we have $ \left(\begin{array}{cc|c}
a& b &0 \\ 
c& d & 0 \\ \hline
0&0  &  e
\end{array}  \right)   \in A $ and $(0)=0$, $(1)=0$, $(2)=1$.

\begin{prop}
$ M_{m} (\C) \tens_{A} M_{m}(\C) $ where $ A $ is the image of (\ref{eq30}) is given by the isomorphism of $ M_{m}(\C)$-$M_{m}(\C)$ bimodules $ \UseComputerModernTips
\xymatrix{
	M_{m}(\C)\tens_{A} M_{m}(\C) \ar[r]^-{T} & M_{m} (\C) \tens \C [\Z_{n}]  }$ which is given by
\begin{eqnarray*}
T(E_{ij}\tens E_{ab})=E_{i b} \delta_{ja}\tens\delta_{(j)}
\end{eqnarray*}
\end{prop}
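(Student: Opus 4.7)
The plan is to determine $P\tens_A P$ explicitly by exploiting the balancing relations coming from $A$, and then to read off $T$ as the change-of-basis map to $M_m(\C)\tens\C[\Z_n]$, checking separately that it is a bimodule homomorphism. The key observation is that $A$ has linear basis $\{E_{kl}:(k)=(l)\}$, and in particular contains all diagonal idempotents $E_{aa}$ as well as the ``block-jump'' units $E_{jj'}$ with $(j)=(j')$; these two families will generate all the relations I need.

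First I would use the idempotents $E_{aa}\in A$ together with $E_{aa}E_{ab}=E_{ab}$ to get
\[
E_{ij}\tens_A E_{ab} \;=\; E_{ij}E_{aa}\tens_A E_{ab} \;=\; \delta_{ja}\,E_{ia}\tens_A E_{ab}\;,
\]
which already kills every elementary tensor with $j\ne a$, so surviving tensors have the form $E_{ij}\tens_A E_{jb}$. Next, for any $j,j'$ in the same block I would use $E_{jj'}\in A$ and $E_{jj'}E_{j'b}=E_{jb}$ to obtain $E_{ij'}\tens_A E_{j'b}=E_{ij}\tens_A E_{jb}$ whenever $(j)=(j')$, so this tensor depends only on $i$, $b$ and the block class $(j)\in\Z_n$. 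This exhibits a spanning set of $P\tens_A P$ indexed by triples $(i,b,g)\in\{0,\dots,m-1\}^2\times\Z_n$, hence of size at most $m^2 n=\dim(M_m(\C)\tens\C[\Z_n])$.

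To promote this to a basis and to prove $T$ bijective, I would construct an inverse
\[
T^{-1}\colon M_m(\C)\tens\C[\Z_n]\longrightarrow P\tens_A P\;,\qquad T^{-1}(E_{ib}\tens\delta_g)=E_{ij}\tens_A E_{jb}\;,
\]
where $j$ is any representative with $(j)=g$; well-definedness of $T^{-1}$ is precisely the block-invariance of the previous paragraph. The compositions $T\circ T^{-1}$ and $T^{-1}\circ T$ then collapse to the two identities already established, the latter using that the factor $\delta_{ja}$ appearing in $T(E_{ij}\tens E_{ab})=E_{ib}\delta_{ja}\tens\delta_{(j)}$ exactly matches the relation $E_{ij}\tens_A E_{ab}=\delta_{ja}\,E_{ij}\tens_A E_{jb}$. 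Bimodule compatibility is then immediate: left multiplication by $E_{cd}$ changes only the $i$ index on both sides, right multiplication by $E_{cd}$ changes only the $b$ index, and $\delta_{(j)}$ is untouched.

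The only real obstacle is the descent of $T$ from $P\tens P$ to $P\tens_A P$, where one must verify
\[
T(E_{ij}E_{kl}\tens E_{ab})=T(E_{ij}\tens E_{kl}E_{ab})\quad\text{for }(k)=(l)\;,
\]
equivalently $\delta_{jk}\delta_{la}\,\delta_{(l)}=\delta_{jk}\delta_{la}\,\delta_{(j)}$ in $\C[\Z_n]$; this holds because on the support $j=k,\ l=a$ the hypothesis $(k)=(l)$ forces $(j)=(a)=(l)$, so the $\Z_n$-labels agree. With this, all other checks reduce to routine matrix-unit bookkeeping.
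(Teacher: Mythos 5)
Your proof is correct and follows essentially the same route as the paper: the two key relations you derive from $E_{aa}\in A$ (forcing $j=a$) and from $E_{jj'}\in A$ with $(j)=(j')$ (showing the tensor depends only on the block of the middle index) are exactly the two displayed computations in the paper's proof. You go further than the paper by explicitly constructing $T^{-1}$, checking that $T$ descends through the balancing relation, and verifying the bimodule property -- all of which the paper leaves implicit -- so your version is, if anything, more complete.
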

\begin{proof}
	First we get, as $ E_{aa}\in A $,
\begin{eqnarray*}
	E_{ij}\tens E_{ab}=E_{i j} \tens E_{aa} E_{ab}  = E_{i j} E_{aa}\tens  E_{ab} = \delta_{ja} E_{ij}\tens E_{ab}\ .
\end{eqnarray*}
Next if $ (k)=(j) $ then $ E_{kj}\in A $ so 
\begin{eqnarray*}
E_{ij}\tens E_{jr}= E_{ik}E_{kj}\tens E_{jr}= E_{ik}\tens E_{kj}E_{jr}= E_{ik}\tens E_{kr}\ .
\end{eqnarray*}
\end{proof}
\subsection*{Case 2:}
We take the embedding $ M_{k}(\C)\longrightarrow M_{m}(\C) $ where $ m=nk $ sending the matrix $ x $ to the block diagonal matrix with  all diagonal blocks being $ x $, and let  $ A $ be the image. E.g.\  for $ n=3 $ we have
\begin{equation}\label{eq31}
x\mapsto \left( \begin{array}{ccc}
x&0  &0  \\ 
0&x  &0  \\ 
0&0  &x 
\end{array} \right) \ .
\end{equation}
\begin{prop}
	$ M_{m} (\C) \tens_{A} M_{m} (\C)$ where $ A $ is the image of (\ref{eq31}) is given by the isomorphism of $ M_{m}(\C)$-$M_{m}(\C)$ bimodules $ R: M_{m}(\C)\tens_{A} M_{m}(\C) \longrightarrow M_{m}(\C) \tens \C [\Z_{n}]  \tens \C [\Z_{n}] $ which is given by	
	\begin{equation*}
	R(E_{ij}\tens E_{ab})=\begin{cases}
	E_{i b}\tens \delta_{r}\tens\delta_{(a)}& \mbox{if}\  j=a+kr \mod m,\  \text{for}\  0\leq r< n\\
	0 & \mbox{otherwise} \ .
	\end{cases}
	\end{equation*}
\end{prop}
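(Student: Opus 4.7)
The plan is to identify a convenient linear basis of $A$, namely the elements $a_{pq} = \sum_{s=0}^{n-1} E_{sk+p,\,sk+q}$ for $0 \le p,q < k$, and to use these to rewrite tensors over $A$. Write $j = (j)k + j'$ and $a = (a)k + a'$ with $j',a' \in \{0,\dots,k-1\}$ and $(j),(a) \in \Z_n$. A direct computation gives $E_{ij}\,a_{j'j'} = E_{ij}$ while $a_{j'j'}\,E_{ab}$ vanishes unless $a' = j'$, so the $A$-balancing identity $E_{ij}\,a_{j'j'}\tens E_{ab} = E_{ij}\tens a_{j'j'}\,E_{ab}$ forces $E_{ij} \tens E_{ab} = 0$ in $P \tens_A P$ unless $j \equiv a \pmod{k}$, which is precisely the condition $j = a + kr \bmod m$ for some $0 \le r < n$ appearing in the proposition.

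To check that the given formula for $R$ descends to $P \tens_A P$, since the $a_{pq}$ span $A$ linearly it suffices to verify $R(E_{ij}a_{pq} \tens E_{ab}) = R(E_{ij} \tens a_{pq}E_{ab})$. Index bookkeeping gives $E_{ij}a_{pq} = \delta_{p,j'}\,E_{i,\,(j)k+q}$ and $a_{pq}E_{ab} = \delta_{q,a'}\,E_{(a)k+p,\,b}$; applying $R$ to each side and tracking which factor of $k$ contributes to the residue mod $k$ versus the block shift mod $n$, both expressions collapse to $\delta_{p,j'}\,\delta_{q,a'}\,E_{ib}\tens \delta_{(j)-(a)}\tens \delta_{(a)}$. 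The same computation confirms that $R$ is a bimodule map for the natural $M_m(\C)$-bimodule structure acting only on the first tensorand.

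To prove $R$ is an isomorphism I would exhibit the explicit inverse
\[
R^{-1}(E_{ib}\tens \delta_r\tens \delta_s) \;=\; E_{i,\,(s+r)k}\tens E_{sk,\,b},
\]
where $(s+r)k$ is reduced mod $m$. Then $R \circ R^{-1} = \id$ is immediate from the definition of $R$ applied to a representative with $a' = 0$. The key step is $R^{-1} \circ R = \id$ on $P \tens_A P$, which reduces to showing that for $j \equiv a \pmod{k}$ one has $E_{ij}\tens E_{ab} = E_{i,(j)k}\tens E_{(a)k,b}$ in the balanced tensor product. This follows by applying the $A$-balancing relation to the element $a_{0,a'} = \sum_{s} E_{sk,\,sk+a'} \in A$, which satisfies $E_{i,(j)k} \cdot a_{0,a'} = E_{ij}$ and $a_{0,a'} \cdot E_{ab} = E_{(a)k,b}$.

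The main obstacle I anticipate is purely notational: keeping the arithmetic mod $k$ (governing the offsets $j',a'$ within a block) separate from and consistent with the arithmetic mod $n$ (governing the block labels $(j),(a)$ and the shift $r$), particularly when verifying that $r=(j)-(a) \bmod n$ emerges correctly on both sides of the well-definedness check. Once this bookkeeping is set up, the argument parallels Case 1 mechanically and no genuinely new idea is needed.
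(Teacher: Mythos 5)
Your proposal is correct and follows essentially the same route as the paper: you use the same linear basis of $A$ (your $a_{pq}$ are the paper's $G_{ad}$, and your $a_{j'j'}$ plays the role of the paper's idempotent $y$) to kill the tensors with $j\not\equiv a \pmod k$ and to derive the identification $E_{ij}\tens E_{ab}=E_{i,(j)k}\tens E_{(a)k,b}$. The only difference is one of explicitness — you verify well-definedness against every basis element of $A$ and exhibit the inverse map, whereas the paper derives the generating relations and leaves bijectivity implicit via the dimension/spanning count.
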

\begin{proof}
	Setting $ y=\displaystyle\sum_{0\leq r < n} E_{a+kr, a+kr}\in A $, we have
	\begin{eqnarray*}
	E_{ij} \tens E_{ab}=E_{ij}\tens y E_{ab}=E_{ij} y \tens E_{ab}
	\end{eqnarray*}
$ E_{ij} \tens E_{ab}=0 $ unless $ j=a+kr $ mod $ m $ for some $ 0\leq r < n $. Now $ A $ has linear basis $ \displaystyle \sum_{0\leq r < n} E_{a+kr, d+kr} =G_{ad}$ for $ (a)=(d) $, i.e. $ a $ and $ d $ in the same block. Then if $ j=a+kr $ element of $ \Z_{m} $
	\begin{eqnarray*}
	E_{ij}\tens E_{ab}=E_{ij}\tens G_{ad}E_{db} =E_{ij} G_{ad}\tens E_{db}	=E_{ij} E_{a+kr, d+kr}\tens E_{db}	=E_{i, d+kr}\tens E_{db}\ .
	\end{eqnarray*}
So the only nonzero tensor product of the $ E_{ij} $ has the following relation, where $ (a)=(d) $, 
\begin{eqnarray*}
E_{i,a+kr}\tens E_{ab}=  E_{i,d+kr}\tens E_{db}\ .
\end{eqnarray*} 
Thus we have $ r $, $(a) $ and the product $ E_{ib} $ are the same of both sides of the relation, so $ R $ is well defined.
\end{proof}
\subsection*{Case 3:} 
For unital algebra $ B $, consider the replication map $\rep: B \rightarrow B ^{\oplus n}=P $ given by 
\begin{eqnarray*}
\mathrm{rep}(b)= b\oplus \dots\oplus b 
\end{eqnarray*}
and call its image $ A $. For $  s\in \Z_{n} $ we label $ b_{,s} $ as the element of $ B^{\oplus n} $ which is $ b \in B $ is the $ s $th component and $ 0 $ in the other positions. 
The elements of $ A $ are of the form $ a=\displaystyle\sum_{  s} b_{,s}$. Now for $ c \in B $
\begin{eqnarray*}
c_{,r} \tens b_{,t} =c_{,r}   \tens a\cdot  1_{,t} =c_{,r}  a  \tens 1_{,t} =(cb)_{,r}  \tens 1_{t}
\end{eqnarray*}
so $ P\tens_{A} P $ has an isomorphism $ u: P \tens_{A} P\rightarrow P \tens C (\Z_{n}) $, where $ t \in \Z_{n} $
\begin{eqnarray*}
u(c_{,r} \tens b_{,t})= (cb)_{,r}  \tens \delta_{t}\ .
\end{eqnarray*}

\section{Block Matrices and Quantum Principle Bundles}\label{sec2}
In this section we will show that the three cases in the previous section are actually examples of quantum principle bundles. In the definition of Hopf-Galois extension we only need the case where $  H=\C[G]$. If $ G $ acts on the algebra $ P $ on the left by algebra maps, i.e. $ g\triangleright (pq)=(g\triangleright  p)(g\triangleright  q) $ then we have a right $ \C[G]$ comodule algebra $\Delta_{R }:P \rightarrow P \tens H $ by
\begin{equation}
\Delta_{R }(p)=\sum_{g\in G} g\triangleright p \tens \delta_{g}\ .
\end{equation}
We set $ R_{n} $ to be the group of complex $ n $th roots of unity, and recall that we label matrices from row and column zero.
\subsection*{Case 1:}
We define the group
\begin{equation}
G=\left\lbrace g_{w}=\left( \begin{array}{c|c|c|c}
1&  &  &  \\\hline
&\omega  &  &  \\ \hline
&  & \ddots &  \\ \hline
&  &  & \omega^{n-1}
\end{array}
 \right):\quad  \omega \in R_{n}  \right\rbrace  \subset   G L_{m}(\C)
\end{equation}
using the blocks of length $ l_{0}, \cdots l_{n-1} $. Now $ G $ acts on $ P=M_{m}(\C) $ by algebra maps $ g_{\omega} \triangleright x= g_{\omega} x g_{\omega}^{-1}$. For $ x $ purely in the $ st $ block for $ s,t \in \Z_{n} $ we have
\begin{eqnarray*}
g_{\omega} \triangleright x=\omega ^{s-t} x
\end{eqnarray*}
so the fixed points of the $ G $ action are precisely the block diagonal subalgebra $ A $. The canonical map is 
\begin{eqnarray*}
\can (E_{ij}\tens E_{ab})=\delta_{ja} \sum_{\omega \in R_{n}} E_{ib} \,\omega^{(a)-(b)} \tens \delta_{\omega} \quad \text{ where} \ a,b,i,j \in \Z_{m} \ .
\end{eqnarray*}
If the canonical map is surjective, then it is automatically injective since the dimension of $ M_{m}(\C) \tens_{A} M_{m}(\C) $ is the same as $ M_{m} (\C) \tens H $ by the previous section. Now for $  \xi \in R_{n} $
\begin{align*}
\can \big(\xi^{(i)-(j)}E_{ij}\tens E_{ji}\big)=\sum_{\omega \in R_{n}} E_{ii} \,\bigg(\frac{\omega}{\xi} \bigg)^{(j)-(i)} \tens \delta_{\omega}
\end{align*}
so
\begin{align*}
 \sum_{j\in \Z_{m} } \frac{1}{l_{(j)}}\xi^{(i)-(j)} \can  \big( E_{ij}\tens E_{ji}\big)=\sum_{q \in \Z_{n},\omega \in R_{n}} E_{ii} \,\bigg(\frac{\omega}{\xi} \bigg)^{q-(i)} \tens \delta_{\omega} 
\end{align*}
and by the formula for the sum of a geometric progression this sum is zero unless $ \dfrac{\omega}{\xi}=1 $, so 
\begin{eqnarray} \label{eq32}
 \can   \big(   \sum_{j,i} \frac{1}{l^{(j)}}\xi^{(i)-(j)}  E_{ij}\tens E_{ji}\big)=n\,\sum_{i} E_{ii} \tens \delta_{\xi}=n \cdot I_{m}\tens \delta_{\xi}
\end{eqnarray}
since the canonical map is  a left $ P $-module map we see that it is surjective, and we have a quantum principle bundle. We have proved the following Proposition
\begin{prop} \label{Prop 3}
	For $ \xi \neq 1 $ set 
	\begin{eqnarray}
	\delta^{(1)}_{\xi} \tens \delta^{(2)}_{\xi}= \dfrac{1}{n}   \sum_{j,i} \frac{1}{l^{(j)}}\xi^{(i)-(j)}   E_{ij}\tens E_{ji}\ , \quad
\delta^{(1)}_{1} \tens \delta^{(2)}_{1}	=  I_{m} \tens I_{m} -\sum_{\xi \neq 1}   \delta^{(1)}_{\xi} \tens \delta^{(2)}_{\xi} \ .
	\end{eqnarray}
	Then $ 1^{(1)} \tens 1^{(2)}=I_{m}\tens I_{m} $ and for all $ \eta \in R_{n} $
	\begin{eqnarray}
	\can\big(\delta^{(1)}_{\eta} \tens \delta^{(2)}_{\eta}\big)=I_{m} \tens \delta_{\eta}\ .
	\end{eqnarray}
\end{prop}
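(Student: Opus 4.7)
My plan is to read off both claims directly from the formula for $\can$ stated in the text immediately above equation (\ref{eq32}). The case $\xi\neq 1$ of $\can(\delta_\xi^{(1)}\tens\delta_\xi^{(2)})=I_m\tens\delta_\xi$ is literally equation (\ref{eq32}) divided by $n$, so no further work is required there.

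For the case $\eta=1$, I would first compute $\can(I_m\tens I_m)$. Writing $I_m\tens I_m=\sum_{i,j}E_{ii}\tens E_{jj}$ and applying $\can(E_{ij}\tens E_{ab})=\delta_{ja}\sum_{\omega\in R_n}\omega^{(a)-(b)}E_{ib}\tens\delta_\omega$, the Kronecker $\delta_{ij}$ collapses the sum to diagonal pairs, and on these the exponent $(j)-(j)$ is zero, so every $\omega$-coefficient is $1$. This yields $\can(I_m\tens I_m)=\sum_i E_{ii}\tens\sum_{\omega\in R_n}\delta_\omega=I_m\tens 1_H$. Then by linearity and the definition of $\delta_1^{(1)}\tens\delta_1^{(2)}$,
\[
\can(\delta_1^{(1)}\tens\delta_1^{(2)})=\can(I_m\tens I_m)-\sum_{\xi\neq 1}I_m\tens\delta_\xi = I_m\tens\Bigl(\sum_{\omega\in R_n}\delta_\omega-\sum_{\xi\neq 1}\delta_\xi\Bigr)=I_m\tens\delta_1.
\]

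The identity $1^{(1)}\tens 1^{(2)}=I_m\tens I_m$ then drops out by summing the definitions of $\delta_\eta^{(1)}\tens\delta_\eta^{(2)}$ over all $\eta\in R_n$; the $\eta=1$ term is engineered precisely so that this sum telescopes back to $I_m\tens I_m$. The only genuine care needed is in the bookkeeping with the block function $(\,\cdot\,)\colon\Z_m\to\Z_n$: one must verify that $(a)-(b)$ vanishes on the diagonal terms $E_{ii}\tens E_{jj}$ and that the subtraction procedure isolates the character $\delta_1$ rather than some other sum of characters. Both are short checks, so the entire argument reduces to a one-line consequence of the character-orthogonality computation already carried out to reach (\ref{eq32}).
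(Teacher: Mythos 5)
Your proposal is correct and takes essentially the same route as the paper: the $\xi\neq 1$ case is exactly the character-orthogonality computation culminating in (\ref{eq32}), and your treatment of $\eta=1$ via $\can(I_m\tens I_m)=I_m\tens 1_H$ together with the telescoping identity $1^{(1)}\tens 1^{(2)}=\sum_\eta\delta_\eta^{(1)}\tens\delta_\eta^{(2)}=I_m\tens I_m$ merely makes explicit the checks the paper leaves implicit when it declares the proposition proved immediately after (\ref{eq32}).
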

\subsection*{Case 2:}
For $ \omega \in R_{n} $ and $ i\in \Z_{n} $, define using blocks of length $ k $
\begin{eqnarray*}
g_{i,\omega}=\left( \begin{array}{ccccccc}
0& \dots &0  &  1&  0&\dots & 0 \\
0& \dots & 0 & 0 &\omega  &\dots  & \dots\\
\colon& \colon & \colon &  \colon& \colon & \colon &\colon\\
\omega^{n-i}& 0 &0  &0  &0  &0  &0\\
\colon& \colon & \colon &  \colon& \colon & \colon &\colon\\
0& \dots &\omega^{n-1}  &0  &0  &\dots &0
\end{array}
\right) 
\end{eqnarray*}
where the original $ 1 $ (in fact $ I_{k} $) is in column $ i $ (counting from column $0  $). Note that 
\begin{eqnarray}
g_{i,\omega} g_{j,\eta}=\eta^{i}g_{i+j,\omega \eta}, \quad 
g_{i,\omega}^{-1}=\omega^{i}g_{-i, \frac{1}{\omega} }\ .
\end{eqnarray}
We take the group $ G $ of projective matrices $ G \subset P GL_{n}(\C) $ consisting of the $ g_{i, \omega} $, so we get $ G \cong \Z_{n} \times R_{n} $. Then define an action of $ G $ on $ M_{m}(\C) $ by $ (i,\omega) \triangleright x=g_{i, \omega} x g_{i, \omega} ^{-1} $
which is not dependent on a scale factor on the $ g_{i,\omega} $. We use $ F_{jt} $ for $ j,t \in \Z_{n} $ to denote the identity matrix in the $ jt$ block and zero elsewhere. Now
\begin{eqnarray*}
(i,\omega) \triangleright F_{jt}=\omega^{j-t} F_{j-i,t-i}, \quad \text{so }\ \Delta_{R} F_{jt}= \sum_{s,\omega} \omega^{j-t} F_{j-s, t-s} \tens \delta_{(s,\omega)}
\end{eqnarray*}
and the canonical map is 
\begin{eqnarray*}
\mathrm{can}(F_{ab}\tens F_{jt})= \sum_{\omega}\omega^{j-t} F_{a,t-j+b}\tens \delta_{(j-b, \omega)} 
\end{eqnarray*}
and a particular case of this is, by setting $ b=j-i $ and $ t=i+a $
\begin{eqnarray*}
\mathrm{can}(F_{a,j-i}\tens F_{j,i+a})= \sum_{\omega}\omega^{j-i-a} F_{a,a}\tens \delta_{(i, \omega)}\ .
\end{eqnarray*}
Now, using the sum of powers of a root of unity, 
\begin{align*}
\mathrm{can}  \big(\sum_{j}   \xi^{i-j}  (F_{a,j-i}\tens F_{j,i+a}) \big)= \sum_{\omega,j} \omega^{-a} (\frac{\omega}{\xi})^{j-i} F_{a,a}  \tens \delta_{(i, \omega)}
=n \, \xi^{-a} F_{a,a} \tens \delta_{(i, \xi)} 
\end{align*}
so
\begin{align}
\mathrm{can}  \big(\sum_{j,a} \dfrac{1}{n}   \xi^{i-j+a}  (F_{a,j-i}\tens F_{j,i+a}) \big)=\sum_{a} F_{a,a} \tens \delta_{(i, \xi)} 
=I_{m}\tens \delta_{(i, \xi)}\ .
\end{align}
Thus we have proved the following result 
\begin{prop}\label{Prop2}
If we define, for $ (i, \xi)\neq (0,1) $	
\begin{eqnarray}
\delta_{(i, \xi)}^{(1)}\tens \delta_{(i, \xi)}^{(2)}=\dfrac{1}{n}\sum_{j,a} \xi^{i-j+a} F_{a,j-i}\tens F_{j,i+a}\ , \quad \delta_{(0, 1)}^{(1)}\tens \delta_{(0, 1)}^{(2)}= I_{m}\tens I_{m} -\sum_{(i,\xi)\neq (0,1)}   \delta_{(i, \xi)}^{(1)}\tens \delta_{(i, \xi)} {} ^{(2)}\ .
\end{eqnarray}
	Then $ 1^{(1)} \tens 1^{(2)}=I_{m}\tens I_{m} $ and for all $ (j,\eta) \in  \Z_{n} \times R_{n}  $ 
		\begin{eqnarray}
	\can\big(\delta^{(1)}_{(j,\eta) } \tens \delta^{(2)}_{(j,\eta) }\big)=I_{m} \tens \delta_{(j,\eta) }\ .
	\end{eqnarray}
\end{prop}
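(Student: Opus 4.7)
The plan is to leverage the calculation performed in the paragraph immediately preceding the statement, which already proves the required formula $\can(\delta^{(1)}_{(i,\xi)}\tens\delta^{(2)}_{(i,\xi)})=I_m\tens\delta_{(i,\xi)}$ for every pair $(i,\xi)\neq(0,1)$. Nothing further is required in that case beyond matching the displayed expression with the definition of $\delta^{(1)}_{(i,\xi)}\tens\delta^{(2)}_{(i,\xi)}$ supplied in the proposition.

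The case $(0,1)$ requires a short extra computation. First I would use the linearity of $\can$ together with the defining equation for $\delta^{(1)}_{(0,1)}\tens\delta^{(2)}_{(0,1)}$, reducing the task to evaluating $\can(I_m\tens I_m)$. Writing $I_m=\sum_{a\in\Z_n}F_{aa}$ and substituting $b=a$, $t=j$ into the formula $\can(F_{ab}\tens F_{jt})=\sum_\omega\omega^{j-t}F_{a,t-j+b}\tens\delta_{(j-b,\omega)}$ that is recorded just above the proposition, I get $\can(F_{aa}\tens F_{jj})=\sum_\omega F_{aa}\tens\delta_{(j-a,\omega)}$. Summing over $a,j$ and reindexing $s=j-a$ collapses the right tensor factor to $\sum_{s,\omega}\delta_{(s,\omega)}=1_H$, so $\can(I_m\tens I_m)=I_m\tens 1_H$. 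Subtracting the contributions of the remaining $(i,\xi)$, which are already known, then leaves precisely $I_m\tens\delta_{(0,1)}$, as required. The identity $1^{(1)}\tens 1^{(2)}=I_m\tens I_m$ is immediate from the construction, since summing the defining formulas over all $(i,\xi)$ telescopes to $I_m\tens I_m$ and $1=\sum_{(i,\xi)}\delta_{(i,\xi)}$ inside $H=\C[G]$.

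The only real obstacle is bookkeeping: carefully tracking the shifts in the row and column indices of the block matrices $F_{jt}$ and the reindexing of the group sum. There is no conceptual difficulty, because the decisive step showing cancellation via the geometric-progression identity has already been carried out in the discussion preceding the statement; what remains is formally organising the $(0,1)$ contribution as the complement of the other contributions in $I_m\tens 1_H$.
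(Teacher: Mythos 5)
Your proposal is correct and follows essentially the same route as the paper: the displayed computation immediately preceding the proposition (the particular case $b=j-i$, $t=i+a$ of the canonical map followed by the geometric-progression sum over $j$ and the weighted sum over $a$) is exactly the paper's proof for $(i,\xi)\neq(0,1)$. Your explicit verification that $\can(I_m\tens I_m)=I_m\tens 1_H$, so that the complementary definition of $\delta^{(1)}_{(0,1)}\tens\delta^{(2)}_{(0,1)}$ yields $I_m\tens\delta_{(0,1)}$ and the normalisation $1^{(1)}\tens 1^{(2)}=I_m\tens I_m$, is a correct filling-in of a step the paper leaves implicit, not a different argument.
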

\subsection*{Case 3:}
We use the group $ G= \Z_{n} $ acting on $ P= B^{\oplus n} $ by $ i\triangleright b_{,s}=b_{,s+i}\mod n$. Now 
\begin{eqnarray}
\can (1_{,t}   \tens 1_{,s})= 1_{,t} . \sum_{i \in \Z_{n}} 1_{,s+i} \tens \delta_{i}=1_{,t}  \tens \delta_{t-s}
\end{eqnarray}
so we have the following Proposition. 
\begin{prop}
If we define, for $ i \in \Z_{n} $
	\begin{eqnarray}\label{eq53}
\delta_{i}^{(1)} \otimes \delta_{i}^{(2)}= \sum_{t}  1_{,t}  \tens 1_{,t-i} \ .
	\end{eqnarray}
		Then $ 1^{(1)} \tens 1^{(2)}= \sum_{t}  1_{,t}  \tens \sum_{s}  1_{,s} $ and
	\begin{eqnarray}
	\can\big(\delta^{(1)}_{i} \tens \delta^{(2)}_{i }\big)= \sum_{t}  1_{,t}  \tens\delta_{i} \ .
	\end{eqnarray}
\end{prop}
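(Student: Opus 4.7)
The plan is to verify the two assertions by direct computation, using (i) the definition of the coaction induced by the $\Z_n$-action given just above the proposition and (ii) the orthogonality of the idempotents $1_{,s}$ in $P=B^{\oplus n}$, which gives $1_{,t}\cdot 1_{,s}=\delta_{t,s}\,1_{,t}$.

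First I would compute the coaction on a basic element. Since $i\triangleright b_{,s}=b_{,s+i}$, for any $s\in\Z_n$ we have
\begin{eqnarray*}
\Delta_R(1_{,s})=\sum_{i\in\Z_n} 1_{,s+i}\tens \delta_i \ .
\end{eqnarray*}
Next, applying the canonical map $\can(p\tens q)=p\,q_{[0]}\tens q_{[1]}$ and using orthogonality of the idempotents gives the basic formula
\begin{eqnarray*}
\can(1_{,t}\tens 1_{,s})=\sum_{i\in\Z_n} 1_{,t}\cdot 1_{,s+i}\tens \delta_i=1_{,t}\tens \delta_{t-s}\ ,
\end{eqnarray*}
which is exactly the displayed formula immediately preceding the proposition.

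Now I would substitute into the ansatz. For fixed $i\in\Z_n$, setting $s=t-i$ and summing over $t$,
\begin{eqnarray*}
\can\bigl(\delta^{(1)}_i\tens \delta^{(2)}_i\bigr)=\sum_{t}\can(1_{,t}\tens 1_{,t-i})=\sum_{t}1_{,t}\tens \delta_{t-(t-i)}=\sum_{t}1_{,t}\tens \delta_i\ ,
\end{eqnarray*}
which is the desired identity. The statement about $1^{(1)}\tens 1^{(2)}$ follows from the fact that the unit of $P$ is $1=\sum_{t}1_{,t}$, so summing the formula for $\delta^{(1)}_i\tens\delta^{(2)}_i$ over $i$ (and reindexing $s=t-i$) gives $\sum_{t,s} 1_{,t}\tens 1_{,s}=1\tens 1$; one may equally just define $1^{(1)}\tens 1^{(2)}$ as $(\sum_t 1_{,t})\tens(\sum_s 1_{,s})$, which equals $1_P\tens 1_P$.

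There is essentially no obstacle here: the proof is a one-line index manipulation once the coaction is written out, and the only bookkeeping to watch is that the indices live in $\Z_n$ so $t-i$ is always meaningful. The substantive content (the calculation of $P\tens_A P$ via the isomorphism $u$) has already been carried out in the preceding Case 3 subsection, so this proposition functions as a direct corollary verifying that the chosen lifts $\delta^{(1)}_i\tens\delta^{(2)}_i\in P\tens P$ are surjective translation maps for the canonical map.
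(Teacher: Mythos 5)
Your proof is correct and follows essentially the same route as the paper: the paper's entire argument is the single computation $\can(1_{,t}\tens 1_{,s})=1_{,t}\cdot\sum_{i}1_{,s+i}\tens\delta_i=1_{,t}\tens\delta_{t-s}$ displayed just before the proposition, after which the claims follow by summing over $t$ exactly as you do.
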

\section{Strong Universal Connection}
We find the strong universal connections corresponding to the cases in the previous section, starting with the back maps $h\mapsto h^{(1)} \tens h^{(2)} $ given there. Note these have been defined so that $ 1^{(1) }\tens 1^{(2)}= 1 \tens 1 $. This uses Theorem \ref{integralHopfGalois} and normalised integral on $ \C[G]$ for $ G $ a finite group $ \int f= \dfrac{1}{|    G |}\displaystyle \sum_{g \in G} f(g)$.
\begin{prop}
	In Case 1
		\begin{eqnarray}
	\omega^{\sharp} (\delta_\eta )=	\dfrac{1}{n} \sum_{i,j\in \Z_{m}: (i)\neq (j)} \frac{1}{l^{(j)}} \eta^{(i)-(j)}  E_{ij}   \tens  E_{ji}+\dfrac{1}{n}I_{m} \tens I_{m}\ .
	\end{eqnarray}
\end{prop}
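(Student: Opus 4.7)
The plan is to apply Theorem~\ref{integralHopfGalois} to the back map produced in Proposition~\ref{Prop 3}. For $H=\C[R_n]$ the normalised integral is $\int f=\frac{1}{n}\sum_g f(g)$, so the bilinear form of the theorem becomes $b(\delta_\alpha,\delta_\beta)=\frac{1}{n}\delta_{\alpha,\beta^{-1}}$. Using the coaction $\Delta_R(E_{ij})=\sum_\omega \omega^{(i)-(j)}E_{ij}\tens\delta_\omega$ from Section~\ref{sec2} and the fact that $S^{-1}=S$ on a commutative group algebra, I would first record that
\begin{eqnarray*}
a_L(\delta_\alpha\tens E_{ij})=\tfrac{1}{n}\alpha^{(i)-(j)}E_{ij}\ ,\quad a_R(E_{ij}\tens\delta_\gamma)=\tfrac{1}{n}\gamma^{(j)-(i)}E_{ij}\ .
\end{eqnarray*}
That is, both $a_L$ and $a_R$ act diagonally on matrix units by scalar character values.

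Next, using $(\Delta\tens\id)\Delta\delta_\eta=\sum_{\alpha\beta\gamma=\eta}\delta_\alpha\tens\delta_\beta\tens\delta_\gamma$, the strong connection formula of Theorem~\ref{integralHopfGalois} reads
\begin{eqnarray*}
\omega^\sharp(\delta_\eta)=\sum_{\alpha\beta\gamma=\eta} a_L(\delta_\alpha\tens\delta_\beta^{(1)})\tens a_R(\delta_\beta^{(2)}\tens\delta_\gamma)\ .
\end{eqnarray*}
I would split the sum over $\beta$ into $\beta\neq 1$ and $\beta=1$. For $\beta\neq 1$, substituting the explicit expression from Proposition~\ref{Prop 3} for $\delta_\beta^{(1)}\tens\delta_\beta^{(2)}$ and using the character identity $\alpha^{(k)-(\ell)}\beta^{(k)-(\ell)}\gamma^{(k)-(\ell)}=\eta^{(k)-(\ell)}$ trivialises the sum over the $n$ pairs $(\alpha,\gamma)$ with $\alpha\gamma=\eta\beta^{-1}$; summing over the remaining $n-1$ values of $\beta$ gives a contribution of $\frac{n-1}{n^2}\sum_{k,\ell}\frac{1}{l^{(\ell)}}\eta^{(k)-(\ell)}E_{k\ell}\tens E_{\ell k}$.

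For $\beta=1$ I would use $\delta_1^{(1)}\tens\delta_1^{(2)}=I_m\tens I_m-\sum_{\xi\neq 1}\delta_\xi^{(1)}\tens\delta_\xi^{(2)}$. The $I_m\tens I_m$ piece immediately gives $\frac{1}{n}I_m\tens I_m$, since each of the $n$ pairs $\alpha\gamma=\eta$ contributes $\frac{1}{n^2}I_m\tens I_m$. The subtracted pieces reduce by the same character calculation to an evaluation of $\sum_{\xi\neq 1}\xi^{(k)-(\ell)}$, which by summing a geometric progression is $n-1$ when $(k)=(\ell)$ and $-1$ otherwise.

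Finally I would collect the terms according to whether $(k)=(\ell)$. On the diagonal blocks the $\frac{n-1}{n^2}$ from $\beta\neq 1$ and the $-\frac{n-1}{n^2}$ from $\beta=1$ cancel exactly; on the off-diagonal blocks the contributions $\frac{n-1}{n^2}$ and $\frac{1}{n^2}$ add to $\frac{1}{n}$. Combined with the leftover $\frac{1}{n}I_m\tens I_m$, this gives the stated formula. The main obstacle is precisely this last cancellation: one must track carefully how the $I_m\tens I_m$ correction to the $\beta=1$ back map is tuned exactly to kill the $(k)=(\ell)$ contributions from the $\beta\neq 1$ sum, which is what restricts the summation in the final answer to $(i)\neq(j)$.
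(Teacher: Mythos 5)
Your proposal is correct and follows essentially the same route as the paper: apply Theorem~\ref{integralHopfGalois} to the back map of Proposition~\ref{Prop 3}, evaluate $a_L$ and $a_R$ diagonally on matrix units, and use the geometric-progression identity for $\sum_{\xi\neq 1}\xi^{(i)-(j)}$ to see that the $(i)=(j)$ contributions cancel while the $(i)\neq(j)$ ones combine to $\tfrac{1}{n}$. The only difference is cosmetic: the paper recombines the $\xi\neq 1$ terms with the subtracted part of the $\xi=1$ back map into a single difference $\delta_{\eta\omega^{-1}\xi^{-1}}-\delta_{\eta\omega^{-1}}$ before evaluating, whereas you evaluate the two pieces separately and cancel at the end.
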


\begin{proof}
	By Theorem \ref{integralHopfGalois} and  Proposition \ref{Prop 3}
		\begin{align*}
	\omega^{\sharp} (\delta_\eta )&= \sum_{\omega,\xi} a_{L} (\delta _{\omega} \tens \delta _{\xi}^{(1)}) \tens a_{R}(\delta _{\xi}^{(2)} \tens \delta _{\eta \omega^{-1}\xi^{-1}})\cr 
	&= \sum_{\omega,\xi \neq 1}  a_{L} (\delta _{\omega} \tens \delta_{\xi}^{(1)} ) \tens  a_{R} (\delta_{\xi}^{(2)} \tens  \delta _{\eta \omega^{-1} \xi^{-1}}  ) + \sum_{\omega} a_{L} (\delta _{\omega} \tens I_{m})\tens  a_{R} (I_{m}\tens  \delta _{\eta \omega^{-1}}  )\cr
	& \quad- \sum_{\omega,\xi \neq 1}  a_{L} (\delta _{\omega} \tens \delta_{\xi}^{(1)} ) \tens  a_{R} (\delta_{\xi}^{(2)} \tens  \delta _{\eta \omega^{-1}}  ) \cr
	&= \sum_{\omega,\xi \neq 1}  a_{L} (\delta _{\omega} \tens \delta_{\xi}^{(1)} ) \tens  a_{R} (\delta_{\xi}^{(2)} \tens  (\delta _{\eta \omega^{-1} \xi^{-1}} -   \delta _{\eta \omega^{-1} } ) )+ \sum_{\omega} a_{L} (\delta _{\omega} \tens I_{m})\tens  a_{R} (I_{m}\tens  \delta _{\eta \omega^{-1}}  )\cr
	&=  \dfrac{1}{n} \sum_{\omega,\xi \neq 1,i,j}    \frac{1}{l^{(j)}}   \xi^{(i)-(j)} a_{L} (\delta _{\omega} \tens E_{ij} ) \tens a_{R}\big( E_{ji}  \tens  (\delta _{\eta \omega^{-1}\xi^{-1}} - \delta _{\eta \omega^{-1}}) \big )\cr
	&\quad + \sum_{\omega} a_{L} (\delta _{\omega} \tens I_{m})\tens  a_{R} (I_{m}\tens  \delta _{\eta \omega^{-1}}  )\ .
	\end{align*}
	Then  we  calculate  $ a_{R} $ and $ a_{L} $ for both sums by $ a_{R}(E_{i j}\tens \delta_{\xi} ) = \dfrac{1}{n} \xi^{(j)-(i)} E_{ij}$ and $	a_{L} (\delta_{\xi} \tens E_{i j}) =\dfrac{1}{n} \xi^{(i)-(j)}  E_{ij}   $ and so 
	\begin{align}\label{eq49}
		\omega^{\sharp} (\delta_\eta )	&=\dfrac{1}{n^{3}} \sum_{\omega,\xi \neq1,i,j} \frac{1}{l^{(j)}}\xi^{(i)-(j)} \omega^{(i)-(j)}  E_{ij}   \tens  E_{ji}  \big((\eta \omega^{-1} \xi^{-1})^{(i)-(j)}- (\eta \omega^{-1} )^{(i)-(j)} \big )\cr 
	&\quad+\sum_{\omega,i} \dfrac{1}{n}E_{ii}\tens\sum_{i} \dfrac{1}{n}E_{ii}\cr
	&=\dfrac{1}{n^{3}} \sum_{\omega,\xi \neq1,i,j} \frac{1}{l^{(j)}}\xi^{(i)-(j)} \eta^{(i)-(j)} \big(\xi^{(j)-(i)} -1\big) E_{ij}   \tens  E_{ji}+\sum_{\omega} \dfrac{1}{n^{2}}I_{m} \tens I_{m} \cr
	&=\dfrac{1}{n^{2}} \sum_{\xi \neq1,i,j} \frac{1}{l^{(j)}} \eta^{(i)-(j)}  E_{ij}   \tens  E_{ji}\big(1-\xi^{(i)-(j)} \big)+ \dfrac{1}{n}I_{m} \tens I_{m}\ .
	\end{align}
	Now for the first term if $ (i)-(j) \neq 0 $ we get 
	\begin{eqnarray}\label{eq50}
	\sum_{\xi} \xi^{(i)-(j) }=0=1+\sum_{\xi\neq 1} \xi^{(i)-(j) }
	\end{eqnarray}
	we split the last equation in (\ref{eq49}) into an $ (i)=(j) $ part (the summand vanishes), and $ (i)\neq(j) $ part where summing over $ \xi $ and using (\ref{eq50}) gives
	\begin{eqnarray*}
	\omega^{\sharp} (\delta_\eta )=	\dfrac{1}{n^{2}} \sum_{i,j: (i)\neq (j)} \frac{1}{l^{(j)}} \eta^{(i)-(j)}  E_{ij}   \tens  E_{ji}\big(n-1+1 \big)=	\dfrac{1}{n} \sum_{i,j: (i)\neq (j)} \frac{1}{l^{(j)}} \eta^{(i)-(j)}  E_{ij}   \tens  E_{ji}\ .
	\end{eqnarray*}
\end{proof}

\begin{prop}
		In Case 2
	\begin{eqnarray}\label{eq42}
	\omega^{\sharp} (\delta_{(k,\eta) })=	\dfrac{1}{n} 	\sum_{  b,s\in\Z_{m}} 
	\eta^{b-s}  F_{b,s}  \tens F_{s+k,b+k} -   \dfrac{1}{n^{2}}   	\sum_{b, i \in \Z_{m}} F_{b,b}  \tens  F_{i+k+b,i+k+b}  \ .
	\end{eqnarray}
\end{prop}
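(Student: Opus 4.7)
The plan is to mirror the Case 1 proof, using the group $G = \Z_n \times R_n$ and the back map $(s,\xi)\mapsto \delta^{(1)}_{(s,\xi)}\tens \delta^{(2)}_{(s,\xi)}$ supplied by Proposition \ref{Prop2}. Applying Theorem \ref{integralHopfGalois} with the normalised integral $\int f = \frac{1}{n^{2}}\sum_{g\in G} f(g)$, the strong universal connection at $h = \delta_{(k,\eta)}$ is
\begin{equation*}
\omega^\sharp(\delta_{(k,\eta)}) = \sum a_L\bigl(h_{(1)}\tens h_{(2)}{}^{(1)}\bigr)\tens a_R\bigl(h_{(2)}{}^{(2)}\tens h_{(3)}\bigr),
\end{equation*}
where the iterated coproduct is the sum of $\delta_{g_1}\tens\delta_{g_2}\tens\delta_{g_3}$ over triples $g_1 g_2 g_3 = (k,\eta)$ in $G$.

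The first step is to split the middle sum into the $g_2 = (0,1)$ piece and its complement, then use the identity
\begin{equation*}
\delta^{(1)}_{(0,1)}\tens \delta^{(2)}_{(0,1)} = I_m\tens I_m - \sum_{(s,\xi)\neq(0,1)}\delta^{(1)}_{(s,\xi)}\tens \delta^{(2)}_{(s,\xi)}
\end{equation*}
to re-express the result as a single sum of the main back-map formula over all $(s,\xi)\in G$ plus an $I_m\tens I_m$ correction, exactly as was done between the second and fourth displayed equations in the Case 1 proof.

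The second step is to evaluate $a_L$ and $a_R$ on the $F$-basis. Using $S\delta_{(s,\omega)} = \delta_{(-s,\omega^{-1})}$ and $b(\delta_{(i,\omega)},\delta_{(j,\xi)}) = \frac{1}{n^{2}}\delta_{i+j,0}\delta_{\omega\xi,1}$, together with the coaction $\Delta_R F_{ab} = \sum_{s,\omega}\omega^{a-b} F_{a-s,b-s}\tens \delta_{(s,\omega)}$, one obtains
\begin{equation*}
a_R(F_{ab}\tens \delta_{(j,\xi)}) = \frac{1}{n^{2}}\,\xi^{b-a}\, F_{a+j,b+j}, \qquad a_L(\delta_{(i,\omega)}\tens F_{cd}) = \frac{1}{n^{2}}\,\omega^{c-d}\, F_{c-i,d-i}.
\end{equation*}
Substituting these, together with the explicit formula for $\delta^{(1)}_{(s,\xi)}\tens \delta^{(2)}_{(s,\xi)}$ from Proposition \ref{Prop2}, turns each summand into a tensor $F_{*,*}\tens F_{*,*}$ weighted by a monomial in the roots of unity.

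The final step is to collapse the remaining sums over the two $R_n$-coordinates and the $\Z_n$-coordinate of the group indices by character orthogonality $\sum_{\omega\in R_n}\omega^p = n\,\delta_{p\equiv 0}$ and $\sum_{s\in\Z_n}\zeta^{s} = n\,\delta_{\zeta,1}$, precisely as (\ref{eq50}) was used in the Case 1 computation. The main obstacle will be bookkeeping: the extra $\Z_n$ factor of $G$ acts by translating block indices, so a careful tracking of shifts through the coproduct, the $a_L, a_R$ evaluations and the character sums is needed to produce the ``$+k$'' translations $F_{s+k,b+k}$ and $F_{i+k+b,i+k+b}$, which had no analogue in Case 1 where the group acted only by scaling. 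Once the characters collapse, the surviving monomials fix the relations between the indices and yield the main term $\frac{1}{n}\sum_{b,s}\eta^{b-s} F_{b,s}\tens F_{s+k,b+k}$, while the $(0,1)$-correction contributes $-\frac{1}{n^{2}}\sum_{b,i} F_{b,b}\tens F_{i+k+b,i+k+b}$, giving (\ref{eq42}).
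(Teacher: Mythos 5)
Your plan follows the paper's proof essentially step for step: the same splitting of the middle coproduct factor at $(0,1)$, the same formulas for $b$, $a_L$ and $a_R$ on the $F$-basis, and the same collapse by root-of-unity sums, so the approach is correct and identical in substance. The only caveat is a bookkeeping one: in the paper's computation the main term $\frac{1}{n}\sum\eta^{b-s}F_{b,s}\tens F_{s+k,b+k}$ and the diagonal correction do not separate as cleanly as your last sentence suggests (both pieces of the split contribute to the $b=s$ coefficient, which only matches the stated formula after recombining), but this does not affect the validity of the method.
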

\begin{proof}
	By Proposition \ref{Prop2}
\begin{align*}
\omega^{\sharp} (\delta_{(k,\eta) })&=\sum_{(i,p,\omega,\xi)} a_{L} \big(\delta _{(p,\omega)} \tens \delta _{(i,\xi)}^{(1)} \big) \tens a_{R}\big(\delta _{(i,\xi)}^{(2)} \tens \delta _{(k-p-i,\eta \omega^{-1}\xi^{-1})}\big)\cr 
&= \sum_{ (p,\omega),  (i,\xi) \neq(0, 1)} a_{L} \big(\delta _{(p,\omega)} \tens \delta _{(i,\xi)}^{(1)} \big) \tens a_{R}\big(\delta _{(i,\xi)}^{(2)} \tens \delta _{(k-p-i,\eta \omega^{-1}\xi^{-1})}\big)\cr 
&\quad+	\sum_{(p,\omega)} a_{L} \big(\delta _{(p,\omega)} \tens \delta _{(0,1)}^{(1)} \big) \tens a_{R}\big(\delta _{(0,1)}^{(2)} \tens \delta _{(k-p,\eta \omega^{-1})}\big)\cr 
&= \sum_{ (p,\omega),  (i,\xi) \neq(0, 1)} a_{L} \big(\delta _{(p,\omega)} \tens \delta _{(i,\xi)}^{(1)} \big) \tens a_{R}\big(\delta _{(i,\xi)}^{(2)} \tens (\delta _{(k-p-i,\eta \omega^{-1}\xi^{-1})} - \delta _{(k-p,\eta \omega^{-1})})\big)\cr 
&\quad+	\sum_{(p,\omega)} a_{L} \big(\delta _{(p,\omega)} \tens I_{m}\big) \tens a_{R}\big(I_{m} \tens \delta _{(k-p,\eta \omega^{-1})}\big)\ . 
\end{align*}
 Using $ a_{R}(F_{j k}\tens \delta_{(r,\xi)} )=  \dfrac{1}{n^{2}} \xi^{k-j} F_{j+r,k+r} $ and $ 	a_{L} (\delta_{(r,\xi)} \tens F_{jk}) =\dfrac{1}{n^{2}} \xi^{j-k} F_{j-r,k-r}  $ for both terms
\begin{align} \label{eq34}
	&= \dfrac{1}{n} 	\sum_{ (p,\omega),  (i,\xi) \neq(0, 1), j,a} a_{L}
	 \big(\delta _{(p,\omega)} \tens  \xi^{i-j+a} F_{a,j-i}        \big) \tens a_{R}\big(   F_{j,i+a} \tens (\delta _{(k-p-i,\eta \omega^{-1}\xi^{-1})} - \delta _{(k-p,\eta \omega^{-1})})\big)\cr 
	 	&= \dfrac{1}{n^{5}} 	\sum_{ (p,\omega),  (i,\xi) \neq(0, 1), j,a} 
	 \big(     \xi^{i-j+a} \omega^{a-j+i}  F_{a-p,j-i-p}       \big) \tens \cr &\quad \big(     (\eta \omega^{-1}\xi^{-1})^{i+a-j}     F_{j+k-p-i,i+a+k-p-i}   - (\eta \omega^{-1})^{i+a-j}  F_{j+k-p,i+a+k-p}\big)\cr  
	 	&= \dfrac{1}{n^{4}} 	\sum_{ p, j,a, (i,\xi) \neq(0, 1)} 
  (\xi \eta)^{i+a-j}  F_{a-p,j-i-p}   \tens \big(  \xi^{j-i-a}     F_{j+k-p-i,a+k-p}   -  F_{j+k-p,i+a+k-p}\big)\cr  
	 &= \dfrac{1}{n^{4}} 	\sum_{ p, j,a ,i\neq0, \xi  } 
	 \eta^{i+a-j}  F_{a-p,j-i-p}   \tens \big(    F_{j+k-p-i,a+k-p}   -    \xi^{i+a-j}   F_{j+k-p,i+a+k-p}\big)\cr
	 &\quad +\dfrac{1}{n^{4}} 	\sum_{ p, j,a , \xi \neq1 } 
	 \eta^{a-j}  F_{a-p,j-p}  \tens F_{j+k-p,a+k-p}  \big(    1  -    \xi^{a-j}  \big) \ .
\end{align}
The first part of equation (\ref{eq34}) is 
\begin{align} \label{eq35}
	&=  \dfrac{1}{n^{3}} 	\sum_{ p, j,a ,i\neq0 } 
	\eta^{i+a-j}  F_{a-p,j-i-p}   \tens \big(    F_{j+k-p-i,a+k-p}   -    \delta_{i+a-j,0}   F_{j+k-p,i+a+k-p}\big)\cr 
		&=  \dfrac{1}{n^{3}} 	\sum_{ p, j,a, i\neq0 } 
	\eta^{i+a-j}  F_{a-p,j-i-p} \tens   F_{j+k-p-i,a+k-p}     -   \dfrac{1}{n^{3}}   	\sum_{ p,a, i\neq0 } F_{a-p,a-p}  \tens  F_{i+a+k-p,i+a+k-p} \cr
		&=\dfrac{n-1}{n^{3}} 	\sum_{ p, s,a} 
		\eta^{a-s}  F_{a-p,s-p} \tens   F_{s+k-p,a+k-p}  -   \dfrac{1}{n^{3}}   	\sum_{ p,a, i\neq0 } F_{a-p,a-p}  \tens  F_{i+a+k-p,i+a+k-p} \cr
			&=   \dfrac{n-1}{n^{2}} 	\sum_{  b,r} 
			\eta^{b-r}  F_{b,r} \tens   F_{r+k,b+k}     -   \dfrac{1}{n^{2}}   	\sum_{b, i\neq0 } F_{b,b}  \tens  F_{i+k+b,i+k+b} 
\end{align}
where we have relabelled $ s=j-i $ in the first term. Next relabelling $ b=a-p $ and $ r=s-p $. Now let $ b=a-p $ and $ s=j-p $ in the second part of (\ref{eq34})
\begin{align} \label{eq38}
\dfrac{1}{n^{4}} 	\sum_{ p, s,b , \xi \neq1 } 
\eta^{b-s}  F_{b,s}  \tens F_{s+k,b+k}  \big(    1  -    \xi^{b-s}  \big)
&=\dfrac{1}{n^{3}} 	\sum_{  s,b , \xi \neq1 } 
\eta^{b-s}  F_{b,s}  \tens F_{s+k,b+k}  \big(    1  -    \xi^{b-s}  \big) \cr
&= \dfrac{1}{n^{2}} 	\sum_{  s,b } 
\eta^{b-s}  F_{b,s}  \tens F_{s+k,b+k}  \big(   1-\delta_{b,s}  \big)\ ,
\end{align}
since $ \displaystyle	\sum_{  \xi \neq1 } \big(    1  -    \xi^{b-s}  \big)=n(1-\delta_{b,s}) $, and adding equations (\ref{eq35}) and (\ref{eq38}) gives the results.
\end{proof}

\begin{prop}
	In Case 3
	 \begin{eqnarray}
	\omega^{\sharp} (\delta_{i}) =  \sum_{s \in \Z_{n}} 1_{,s} \tens 1_{,s-i}\ . 
	\end{eqnarray} 
\end{prop}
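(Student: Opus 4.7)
The plan is to apply Theorem \ref{integralHopfGalois} to the back map supplied by the preceding Case 3 proposition, exactly as was done in Cases 1 and 2. The back map candidate $\delta_i \mapsto \sum_t 1_{,t} \tens 1_{,t-i}$ already satisfies $\sum_i \omega^\sharp(\delta_i) = 1 \tens 1$ (no correction at $i=0$ is needed), so the computation should be significantly cleaner than in the previous two cases, with no separate "correction term" branches to bookkeep.

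First, I would record the two explicit building blocks. Using $\Delta_R(1_{,u}) = \sum_i 1_{,u+i} \tens \delta_i$, the normalised integral $\int f = \frac{1}{n}\sum_g f(g)$ on $\C[\Z_n]$, and $S\delta_g = \delta_{-g}$, the bilinear form $b(\delta_j,\delta_h) = \int(\delta_j S\delta_h) = \frac{1}{n}\delta_{j,-h}$. From this one obtains
\begin{eqnarray*}
a_R(1_{,u} \tens \delta_l) = \tfrac{1}{n} 1_{,u-l}, \qquad a_L(\delta_j \tens 1_{,t}) = \tfrac{1}{n} 1_{,t+j},
\end{eqnarray*}
where the second uses $S^{-1}=S$ on $\C[\Z_n]$.

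Next, I would expand $\Delta^{(2)}\delta_i = \sum_{j+k+l=i} \delta_j \tens \delta_k \tens \delta_l$ and substitute the back map $\delta_k^{(1)} \tens \delta_k^{(2)} = \sum_t 1_{,t} \tens 1_{,t-k}$ into the formula of Theorem \ref{integralHopfGalois}:
\begin{eqnarray*}
\omega^\sharp(\delta_i) = \sum_{j+k+l=i,\,t} a_L(\delta_j \tens 1_{,t}) \tens a_R(1_{,t-k} \tens \delta_l).
\end{eqnarray*}
Plugging in the formulas above gives $\frac{1}{n^2} \sum_{j+k+l=i,\,t} 1_{,t+j} \tens 1_{,t+j-i}$, where I have used $t-k-l = t+j-i$ on the right leg.

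Finally, the combinatorial simplification: relabel $s = t+j$. For each fixed $s \in \Z_n$ there are exactly $n$ pairs $(j,t)$ giving this $s$, and for each such $j$ there are exactly $n$ pairs $(k,l)$ with $k+l = i-j$. So the total multiplicity of $1_{,s} \tens 1_{,s-i}$ is $n^2$, which precisely cancels the $\frac{1}{n^2}$ prefactor and yields $\sum_s 1_{,s} \tens 1_{,s-i}$. The only potential obstacle is the index-juggling in this final step, but it is much milder than in Case 2 because the back map has no correction term and no roots-of-unity sums appear — everything is a pure counting argument on the abelian group $\Z_n$.
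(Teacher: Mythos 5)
Your proposal is correct and follows essentially the same route as the paper: apply Theorem \ref{integralHopfGalois} with the Case 3 back map, compute $a_L(\delta_j\tens 1_{,t})=\tfrac{1}{n}1_{,t+j}$ and $a_R(1_{,u}\tens\delta_l)=\tfrac{1}{n}1_{,u-l}$, and observe that the $n^2$-fold multiplicity in the sum over $\Z_n$-indices cancels the $\tfrac{1}{n^2}$ prefactor after the relabelling $s=t+j$. The only cosmetic difference is that the paper first collapses the $k$-sum (picking up one factor of $n$) and then relabels, whereas you do the counting in one step.
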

\begin{proof}
	From Theorem~\ref{integralHopfGalois} and (\ref{eq53}) 
\begin{eqnarray*}
\omega^{\sharp} (\delta_{i})= \sum_{j,k} a_{L} (\delta_{j}  \tens \delta_{k} {} ^{(1)})\tens a_{R} (\delta_{k}{} ^{(2)}  \tens \delta_{i-j-k} )= \sum_{j,k,t} a_{L} (\delta_{j}  \tens 1_{,t})\tens a_{R} (1_{,t-k}  \tens \delta_{i-j-k} )
\end{eqnarray*}
as $ \Delta_{R} b_{,t}= \displaystyle\sum_{s}b_{t+s} \tens \delta_{s}$, then
\begin{align*}
a_{L} (\delta_{j}  \tens 1_{,t})&=\sum_{s}b(\delta_{j} \tens \delta_{-s}) 1_{,t+s}= \sum_{s}(\int \delta_{j}  \delta_{s}) 1_{,t+s}= \dfrac{1}{n} 1_{,t+j}\ ,\cr
a_{R} (1_{,t} \tens \delta_{j}  )&=\sum_{s}  1_{,t+s} b(\delta_{s} \tens \delta_{j}) = \sum_{s}(\int  \delta_{s} \delta_{-j} )  1_{,t+s}= \dfrac{1}{n} 1_{,t-j}
\end{align*}
 so 
\begin{eqnarray}
\omega^{\sharp} (\delta_{i})=\dfrac{1}{n^{2}}  \sum_{j,k,t} 1_{,t+j} \tens 1_{,t-i+j}=\dfrac{1}{n}  \sum_{j,t} 1_{,t+j} \tens 1_{,t-i+j} 
\end{eqnarray} 
and setting $ s=t+j $ gives the answer.
\end{proof}

 \section{A Trivial Quantum Principle Bundle}\label{secTQPB}
 We first recall that if $\Phi,\Psi$ are maps from a coalgebra (in our case $H$) to an algebra then
so is the \textit{convolution product}\index{convolution}\index{$\conv$ convolution product}  $\conv$ defined by
\[ \Phi\conv \Psi=\cdot\,(\Phi\tens\Psi)\Delta\]
and that $\Phi$ is {\em{convolution-invertible}} when there is an inverse $ \Phi^{-1}$ such that
 $ \Phi\conv \Phi^{-1}= \Phi^{-1}\conv \Phi=1 . \epsilon$.  
 \begin{prop}\label{prop1}\cite{BMbook} Let $P$ be a right $H$-comodule algebra equipped with a convolution-invertible right-comodule map $\Phi:H\to P$ with $\Phi(1)=1$. Then $P$ is a quantum principal bundle over $A=P^H$. We call it a trivial bundle with trivialisation $\Phi$. 
 \end{prop}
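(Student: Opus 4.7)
The plan is to construct an explicit two-sided inverse to the canonical map $\ver^\sharp\colon P\tens_A P\to P\tens H$, using the trivialisation $\Phi$ and its convolution inverse $\Phi^{-1}$. I would define
\[
\ver^{\sharp-1}\colon P\tens H \longrightarrow P\tens_A P, \qquad \ver^{\sharp-1}(p\tens h) = p\,\Phi^{-1}(h_{(1)}) \tens_A \Phi(h_{(2)}),
\]
and then check that it is a two-sided inverse of $\ver^\sharp$.

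For $\ver^\sharp\circ\ver^{\sharp-1}=\id$, apply $\ver^\sharp$ and use that $\Phi$ is a right $H$-comodule map, so $\Delta_R\Phi(h)=\Phi(h_{(1)})\tens h_{(2)}$; this produces $p\,\Phi^{-1}(h_{(1)})\Phi(h_{(2)}) \tens h_{(3)}$, and the convolution-inverse identity $\Phi^{-1}(h_{(1)})\Phi(h_{(2)})=\epsilon(h)\,1$ collapses this to $p\tens h$.

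For $\ver^{\sharp-1}\circ\ver^\sharp=\id$, applying it to $p\tens_A q$ yields $p\,q_{[0]}\Phi^{-1}(q_{[1]}) \tens_A \Phi(q_{[2]})$. The key step is to show that $q_{[0]}\Phi^{-1}(q_{[1]})$ lies in $A=P^{\mathrm{coH}}$, so that it may be moved across the balanced tensor. For this I would first establish the twisted comodule-map identity
\[
\Delta_R\Phi^{-1}(h)=\Phi^{-1}(h_{(2)})\tens S(h_{(1)}),
\]
which follows by applying $\Delta_R$ to $\Phi(h_{(1)})\Phi^{-1}(h_{(2)})=\epsilon(h)\,1$ and resolving the resulting equation using the comodule-map property of $\Phi$, the antipode axiom, and another application of convolution inversion. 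A direct Sweedler computation then gives $\Delta_R(q_{[0]}\Phi^{-1}(q_{[1]})) = q_{[0]}\Phi^{-1}(q_{[1]})\tens 1$, confirming the element is coinvariant. Moving this coinvariant across the balanced tensor and applying $\Phi^{-1}(q_{[1]})\Phi(q_{[2]})=\epsilon(q_{[1]})\,1$ returns $p\tens_A q_{[0]}\epsilon(q_{[1]})=p\tens_A q$.

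The main obstacle is the twisted comodule-map identity for $\Phi^{-1}$; once that bookkeeping is done, both verifications reduce to short Sweedler identities, with $\Phi(1)=1$ ensuring the normalisations are compatible and that $\ver^{\sharp-1}(1\tens 1)=1\tens_A 1$.
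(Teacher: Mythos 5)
Your argument is correct. Note that the paper does not prove this proposition at all --- it is quoted from the reference [BMbook] --- so there is no in-paper proof to compare against; your construction of the explicit inverse $p\tens h\mapsto p\,\Phi^{-1}(h_{(1)})\tens_A\Phi(h_{(2)})$, together with the twisted comodule identity $\Delta_R\Phi^{-1}(h)=\Phi^{-1}(h_{(2)})\tens S(h_{(1)})$ used to push the coinvariant $q_{[0]}\Phi^{-1}(q_{[1]})$ across the balanced tensor product, is exactly the standard proof found in that reference and in the cleft-extension literature. The one point worth making explicit in a final write-up is the justification of the twisted identity itself: it follows because both $h\mapsto\Delta_R\Phi^{-1}(h)$ and $h\mapsto\Phi^{-1}(h_{(2)})\tens S(h_{(1)})$ are convolution inverses of $h\mapsto\Phi(h_{(1)})\tens h_{(2)}$ in $\mathrm{Hom}(H,P\tens H)$, and convolution inverses are unique; you gesture at this correctly, so there is no gap.
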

We show that the algebra $ M_{n}(\C) $ is a trivial Hopf-Galois extension of the group $ \Z_{n} \times \Z_{n}$, referring to the construction in Section \ref{sec2} Case 2. We use the notation $ R_{n}$ to be the multiplicative group of the $ n $th roots of unity, generated by $ x=e^{2\pi i/n} $ with $ x^{n}=1 $.
\begin{prop}
The construction of Section \ref{sec2} Case 2 gives a trivial quantum principle bundle with algebra $ P=M_{n}(\C) $ and Hopf algebra $ H=\C [G] $ for $ G=\Z_{n}\times R_{n} $.
\end{prop}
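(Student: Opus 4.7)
The plan is to apply Proposition~\ref{prop1}, so I need to exhibit a convolution-invertible right $H$-comodule map $\Phi:H\to P$ with $\Phi(1)=1$. I specialize Section~\ref{sec2} Case~2 with block size $k=1$, so $m=n$, the fixed subalgebra $A$ is the scalar matrices $\C\cdot I_n$, and the coaction on $P=M_n(\C)$ comes from the clock-shift action $(s,\nu)\triangleright E_{ab}=\nu^{a-b}E_{a-s,b-s}$. The proposed trivialisation is
\[
\Phi(\delta_{(i,\omega)})\ =\ \frac{1}{n}\sum_{p\in\Z_n}\omega^{p-i}\,E_{i,p}\ ,
\]
i.e.\ a rank-one matrix supported on row $i$. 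One can motivate this choice by writing $\Phi(\delta_g)=g^{-1}\triangleright P_0$ with $P_0=\tfrac{1}{n}\sum_p E_{0,p}$, which is (up to scale) a cyclic vector for the $G$-action whose $G$-average is $I_n/|G|$; this form automatically makes $\Phi$ $G$-equivariant.

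The two normalization checks are quick. First I would verify $\Phi(1_H)=I_n$ by expanding $1_H=\sum_{(i,\omega)}\delta_{(i,\omega)}$ and applying the orthogonality $\sum_{\omega\in R_n}\omega^{p-i}=n\delta_{p,i}$, which collapses the sum to $\sum_i E_{i,i}=I_n$. Next, since $H=\C[G]$, the right $H$-comodule property reduces to $(s,\nu)\triangleright\Phi(\delta_{(i,\omega)})=\Phi(\delta_{(i,\omega)(s,\nu)^{-1}})$ for all $(s,\nu)\in G$; substituting the action formula and reindexing $p\mapsto p-s$ shows both sides equal $\tfrac{1}{n}\sum_{p}(\omega\nu^{-1})^{p-(i-s)}E_{i-s,p}$.

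The main obstacle is convolution invertibility. I plan to exhibit the explicit inverse
\[
\Phi^{-1}(\delta_{(s,\nu)})\ =\ \frac{1}{n}\sum_{l\in\Z_n}\nu^{s+l}\,E_{l,-s}
\]
and verify $(\Phi\conv\Phi^{-1})(\delta_{(k,\xi)})=\delta_{(k,\xi),(0,1)}\,I_n$ by direct expansion. After using $E_{i,p}E_{l,i-k}=\delta_{p,l}E_{i,i-k}$ to collapse the matrix product, the two independent orthogonalities $\sum_{p\in\Z_n}\xi^p=n\delta_{\xi,1}$ and $\sum_{\omega\in R_n}\omega^{-k}=n\delta_{k,0}$ force $(k,\xi)=(0,1)$ and leave precisely $I_n$. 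Conceptually, viewing $\Phi$ as an element of the convolution algebra $\C[G]^*\tens P\cong\C G\tens M_n(\C)$ and Fourier-transforming on $\C G$, the components $\hat\Phi(a,b)=\sum_i \zeta^{ai}E_{i,i-b}$ (with $\zeta=e^{2\pi i/n}$) are scaled shifted-diagonal matrices, each invertible in $M_n(\C)$; this is the structural reason the convolution inverse must exist, and it also tells me the right shape to guess for $\Phi^{-1}$. Once all three conditions are verified, Proposition~\ref{prop1} immediately delivers the trivial quantum principal bundle structure on $M_n(\C)$ over $A=\C\cdot I_n$ with structure Hopf algebra $\C[\Z_n\times R_n]$.
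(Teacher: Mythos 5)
Your proposal is correct and takes essentially the same route as the paper: both apply Proposition \ref{prop1} by exhibiting an explicit trivialisation, and your $\Phi(\delta_{(i,\omega)})=\tfrac{1}{n}\sum_p\omega^{p-i}E_{i,p}$ and inverse $\tfrac{1}{n}\sum_l\nu^{s+l}E_{l,-s}$ are exactly the maps the paper obtains from its solution $\beta_{0,k}=\gamma_{k,0}=\tfrac{1}{n}$ (all other entries zero). The only difference is presentational — you guess the trivialisation and verify the comodule and convolution-inverse conditions directly, whereas the paper first derives the general form $\Phi_{s,\omega,i,j}=\omega^{j-i}\beta_{i-s,j-s}$ of a comodule map and then solves the resulting equations for $\beta,\gamma$.
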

\begin{proof}
	We start by defining $  \Phi(\delta_{(s,\omega)})=\displaystyle\sum_{ij} \Phi_{s,\omega,i,j} F_{ij}$ to be a right comodule map, meaning the following quantities are equal 
	\begin{align}
	\Delta_{R}\Phi (\delta_{(s,\omega )})&=\sum_{i,j,r,\xi}  \Phi_{s, \omega, i,j} \xi^{i-j} F_{i-r,j-r} \tens \delta_{(r,\xi)} \cr
	(\Phi \tens \id)\Delta (\delta_{(s,\omega )})&= \sum_{t,\eta} \Phi (\delta_{(t,\eta )}) \tens \delta_{(s-t,\frac{\omega}{\eta} )}=\sum_{t,\eta,p,q } \Phi _{t,\eta,p,q } F_{pq} \tens \delta_{(s-t,\frac{\omega}{\eta} )}\ .
	\end{align} 
	Using these we can show that there are $ \beta_{ij} $ with
	\begin{align*}
	\Phi_{s,\omega,i,j}= \omega^{j-i} \beta_{i-s,j-s}
	\end{align*}
 and for $ \Phi (1)=1  $ we need $ \displaystyle\sum_{i} \beta_{ii}=\frac{1}{n} $. The inverse $ \Psi $ of $ \Phi $ in Proposition \ref{prop1} can be shown to obey $ \Delta_{R} \Psi (h) = \Psi  (h_{(2)}) \tens Sh_{(1)}$. Writing $ \Psi  $ as $ \Psi (\delta _{(s,\omega)})=\displaystyle\sum_{ij} \Psi _{s, \omega, i ,j} F_{ij} $ we can show that $  \Psi _{s,\omega,i,j}= \omega^{i-j} \gamma_{i+s,j+s} $. The equations for convolution inverse reduce to 
	\begin{equation*}\label{aa}
	\eta^{j-i} \beta_{i-t,j-t} (\frac{\omega}{\eta})^{j-q} \gamma_{j+s-t,q+s-t} F_{i q}=\begin{cases}
	I_{n} & \mbox{if}  \ s=0 \text{ and} \ \omega=1 \\ 0 & \mbox{otherwise}  
	\end{cases} 
	\end{equation*}
	so 
	\begin{equation*}\label{bb}
	\sum_{t,\eta,j} \omega^{j-q} \eta^{q-i} \beta_{i-t,j-t}  \gamma_{j+s-t,q+s-t} =\begin{cases}
	\delta_{i q}& \mbox{if}  \ s=0 \text{ and} \ \omega=1 \\ & \mbox{otherwise}  
	\end{cases} 
	\end{equation*}
	the sum over $ \eta  $ gives zero unless $ q=i $, so we are left with 
	\begin{equation*}\label{d}
	n \sum_{j} \omega^{j-i} \sum_{t}\beta_{i-t,j-t}  \gamma_{j+s-t,i+s-t} =\begin{cases}
	1& \mbox{if} \ s=0 \text{ and} \ \omega=1 \\ 0  & \mbox{otherwise} \ .
	\end{cases} 
	\end{equation*}
	Now we use the result that evaluation gives an isomorphism from $ \C[\omega] _{<n}$ (the polynomials of degree $ <n \mod n $) to $ \C[R_{n}] $ the complex functions on the set $ R_{n} $. Using this we can rewrite (\ref{d}) as 
	\begin{equation*}\label{e}
	\sum_{t}\beta_{i-t,j-t}  \gamma_{j+s-t,i+s-t} =\begin{cases}
	\frac{1}{n} & \mbox{if} \ s=0  \\ 0 & \mbox{otherwise}
	\end{cases} 
	\end{equation*}
	let $ r=i-t $ so $ t=i-r $ and $ k=j-i $ then for $ k $ and $ s $
	\begin{equation}\label{g}
	\sum_{r}\beta_{r,r+k}  \gamma_{s+r+k,s+r} =\begin{cases}
	\frac{1}{n^{2}}  & \mbox{if}  \ s=0  \\ & \mbox{otherwise}\ .
	\end{cases} 
	\end{equation}
	The values $ \beta_{0,k}=\frac{1}{n} $, $ \gamma_{k,0}=\frac{1}{n} $ for all $ k\in \Z_{n} $ with all other $ \beta_{ij} $ and $ \gamma_{ij} $ zero solve (\ref{g}), and also $ \displaystyle\sum_{i} \beta_{ii}=\frac{1}{n} $. Thus we have a trivial Hopf-Galois extension. 
\end{proof}

\section{Consequences for Differential Calculus}
We can use Hopf-Galois extensions to study differential calculi on algebras. In particular we have the idea of a quantum principle bundle where we have the exact sequence 
\begin{eqnarray}\label{eq44} 
0 \longrightarrow P \, \Omega_{A}^{1} P  \xrightarrow{ \inc}\Omega_{P}^{1} \xrightarrow{\ver} P \tens \Lambda_{H}^{1} \longrightarrow 0
\end{eqnarray}
where $ \inc $ is the inclusion map and $ \Lambda_{H}^{1} $ is the left invariant $ 1 $-forms on $ H $. The vertical map is defined by 
\begin{eqnarray*}
\ver  (p . \extd q)=p q_{[0]} \tens  S(q_{[1]})  \extd q_{[2]}
\end{eqnarray*}
and is well defined if the map $ p . \extd q \mapsto p_{[0]} q_{[0]} \tens  p _{[1]} \extd q_{[1]}  $ from $\Omega_{P}^{1}   $ to $ P \tens \Omega_{H}^{1}  $ is well defined. This condition can be thought of as $ H $ coacting on $ P $ in a differentiable manner.

We shall use this theory to build calculi for matrices $ M_{n}(\C) $ from calculi on groups. We take the Case 2 of our previous discussion, where $ A= M_{1}(\C) $ and $ P=M_{n}(\C) $. As $ \extd 1=0 $, $ A $ must have the zero calculus, and (\ref{eq44}) becomes
\begin{eqnarray*}\label{eq45}
0 \longrightarrow 0  \longrightarrow \Omega_{P}^{1} \xrightarrow{\ver} P \tens \Lambda_{H}^{1} \longrightarrow 0
\end{eqnarray*}
so the left module map $\ver $ is an isomorphism. If we write $ \ver (\xi)=\xi^{0}\tens \xi^{1} $ then 
\begin{eqnarray}
\ver (p\, \xi )= p\,\xi^{0} \tens \xi^{1} \quad \text{ and} \quad \ver (\xi\, p  )= \xi^{0} p_{[0]}\tens \xi^{1} \triangleleft p_{[1]}
\end{eqnarray}
where $ \eta \triangleleft h =S(h_{(1)}) \eta h_{(2)}$. The first order left covariant differential calculi on $ H=\C[G]$ for a finite group $ G $
correspond to subsets $\mathcal{C}\subseteq G\setminus\{e\}$ \cite{BMbook}. The basis as a left module for the left invariant 1-forms is 
$e_a$ for $a\in\mathcal{C}$, with relations and exterior derivative for $f\in \C[G]$ being
\[ e_a.f=R_a(f)e_a,\quad \extd
f=\sum_{a\in {\mathcal{C}}}(R_a(f)-f) e_a\]
where $R_a(f)=f((\ )a)$ denotes right-translation. 
	We can calculate 
	\begin{align*}
	\ver(\extd \, p)&= \sum_{g\in G} g\triangleright p \tens S(\delta_{g_{(1)}}) \extd \delta _{g_{(2)}}\cr
	&= \sum_{g,x,y \in G :xy=g} \sum _{a\in \mathcal{C}} g\triangleright p \tens \delta_{x^{-1}} ( \delta_{y a^{-1}}-\delta_{y}  ) e_{a} \cr
	&=  \sum _{a\in \mathcal{C}} \sum_{x,g\in \mathcal{C}} g\triangleright p \tens \delta_{x^{-1}} (\delta_{g,a}-\delta_{g,e}) e_{a}=  \sum _{a\in \mathcal{C}} (a \triangleright p -p)\tens e_{a}\ .
	\end{align*}

Based on the idea of projective representation, we define a projective homomorphism $ \pi : G \rightarrow P^{\inv} $ for a group $ G $ and the group $ P^{\inv} $ of invertible elements of an algebra $ P $. If the algebra is over the field $ \C $ then we have $ \pi(e) =1$ and $ \pi(x)  \pi(y) =C _{x, y} \pi(x y) $ where $ e \in G $ is the group identity and for $ x,y \in G $ we have $ C _{x ,y} \in \C \setminus \{ 0\} $. Now we have an action of $ G $ on $ P $ by algebra maps $ g \triangleright p=  \pi (g) p  \pi(g)^{-1} $. For a finite group $ G $ we can write this as a right coaction of the Hopf algebra $ \C[G] $ of complex functions on $ G $, 
\begin{eqnarray}
 \Delta_{R}:P \rightarrow P \tens H , \quad \Delta_{R}(p)=p_{[0]}\tens p_{[1]}=\sum_{g\in G} g\triangleright p \tens \delta_{g}\ .
\end{eqnarray}
In the case where $ H $ has a left covariant calculus $ \Omega_{H}^{1} $, we have the right adjoint action of $ H $ on $ \Omega^{1}_{H} $
\[e_{a} \triangleleft \delta_{g}=\sum_{  x,y : xy=g} S(\delta_{x}) e_{a}\delta_{y}=\sum_{  x,y : xy=g} \delta_{x^{-1}} \delta_{ya^{-1}} e_{a}=\delta_{g,a} \sum_{x} \delta_{x} e_{a}=\delta_{g,a}e_{a}\ .\]
One of the simplest way to describe differential calculi is by using central generators (i.e. commute with elements of the algebra). We have an isomorphism $ \Omega_{P}^{1} \xrightarrow{\ver} P \tens \Lambda_{H}^{1} $, so a basis of $ \Lambda_{H}^{1} $ would generate $ \Omega_{P}^{1} $ as a left $ P $-module, but $ \Lambda_{H}^{1} $ is not central. However we can make another isomorphism in our Case 2, which will explicitly give $ \Omega_{P}^{1} $ for $ P=M_{n}(\C) $ in a frequently presented form with central generators. Consider the left $ P $- action on the image of $ \ver $, $ P \tens \Lambda_{H}^{1} $
\begin{eqnarray} \label{eq46}
(p \tens e_{a}) q =p q_{[0]} \tens e_{a} \triangleleft q_{[1]}=\sum_{g\in G} p (g\triangleright q) \tens e_{a} \triangleleft \delta_{g}=p(a \triangleright q) \tens e_{a} 
\end{eqnarray}
 in the case where the group action is given by a projective homomorphism we define a map $ \Phi: P \tens \Lambda_{H}^{1}  \rightarrow E= P \tens \Lambda_{H}^{1} $ by $  \Phi (p \tens e_{a})= p\, \pi (a) \tens  e_{a} $, then from (\ref{eq46}) 
\begin{eqnarray*}
\Phi ((p \tens e_{a}). q)=\Phi  (p \pi(a) q \pi(a)^{-1} \tens e_{a})=\Phi  (p \pi(a) q \tens e_{a}) =\Phi  (p\tens e_{a})\cdot (q \tens 1)
\end{eqnarray*}
using the product of tensor product. Thus on $ E= P \tens \Lambda_{H}^{1} $ the left and right actions are purely multiplication on the $ P $ part, i.e. $ \Lambda_{H}^{1}  $ is central. We can define $ \Omega_{P}^{1}= P \tens \Lambda_{H}^{1}$ with 
\begin{eqnarray*}
\extd p= \Phi ( \ver (\extd p))= \sum_{a \in \mathcal{C}} [\pi (a) , p] \tens e_{a}
\end{eqnarray*}
giving $ P $ a calculus with $ | \mathcal{C}    |  $ free central generators. In the case of $ P=M_{2}(\C) $ we have, taking $ \mathcal{C} $ to include all non identity elements, 
\begin{align}
\extd p&= \left[   \left( \begin{array}{cc}
1&0  \\
0&-1 
\end{array}
\right) , p   \right] \tens e_{g_{0,-1}} + \left[   \left( \begin{array}{cc}
0&1  \\
1&0 
\end{array}
\right) , p   \right] \tens e_{g_{1,1}} + \left[   \left( \begin{array}{cc}
0&1  \\
-1&0 
\end{array}
\right) , p   \right] \tens e_{g_{1,-1}}\cr
&= \left[  E_{00}- E_{11},p  \right]  \tens e_{g_{0,-1}}+ \left[  E_{01},p  \right]  \tens \left(e_{g_{1,1}}+ e_{g_{1,-1}} \right) + \left[  E_{10},p  \right]  \tens \left(e_{g_{1,1}}- e_{g_{1,-1}} \right)\ .
\end{align}
Thus the calculus for $ \mathcal{C}= \left\lbrace  (0,-1), (1,1), (1,-1) \right\rbrace  $ is the 3D universal calculus for $ M_{2} $, which in \cite{BMbook} Example 1.8 is defined in terms of the inner element $ \h (E_{00}-E_{11}) \oplus E_{01}\oplus E_{10} $. In addition we have $ \mathcal{C}= \left\lbrace   (1,1), (1,-1) \right\rbrace  $ corresponding to the 2D non-universal calculus with inner element $ E_{01}\oplus E_{10} $. Note that the structure of the group action we have imposed means that we do not get the full rage of calculi on $ M_{2} (\C) $ described in \cite{BMbook}.

\end{document}